%

\documentclass{article}[11pt]

\setlength{\topmargin}{0in}
\setlength{\evensidemargin}{0in}
\setlength{\oddsidemargin}{0in}
\setlength{\textwidth}{38pc}

\usepackage{amsmath,amssymb, graphicx, color}
\usepackage{hyperref}
\usepackage{amsfonts,mathrsfs}
\usepackage{cite}
\usepackage{amsthm}
\usepackage{setspace}
\usepackage{algorithm}
\usepackage[table]{xcolor}
\usepackage{bigdelim}
\usepackage{etoolbox}
\usepackage{blkarray}
\usepackage{qtree}
\usepackage{algpseudocode}
\usepackage{multirow}
\usepackage{bm}
\usepackage{float}
\usepackage[font=small,labelfont=sl]{caption}
\usepackage{tikz}
\usepackage{tabto}
\usepackage[scanall]{psfrag}
\usepackage{anyfontsize}
\usepackage{subfig}

\let\bbordermatrix\bordermatrix
\patchcmd{\bbordermatrix}{8.75}{4.75}{}{}
\patchcmd{\bbordermatrix}{\left(}{\left[}{}{}
\patchcmd{\bbordermatrix}{\right)}{\right]}{}{}

\hypersetup{
pdfpagemode=PageWidth,  
pdftitle={Superfast Tikhonov Regularization of Toeplitz Systems},
colorlinks=true,
linkcolor=blue,
citecolor=red
}

\newcommand{\by}{\times}
\newcommand{\jma}{{\mathrm{\mathbf{j}}}}
\newcommand{\ema}{\mathrm{\mathbf{e}}}
\newcommand{\set}[1]{\left\{#1\right\}}
\newcommand{\dsum}[3]{\displaystyle\sum_{#1}^{#2}{#3}}
\newcommand{\mathtext}[2]{\text{#1}\left({#2}\right)}
\newcommand{\diag}[1]{\mathtext{diag}{#1}}
\newcommand{\degr}[1]{\mathtext{deg}{#1}}
\newcommand{\tdg}[1]{\tau\mbox{--}\degr{#1}}
\newcommand{\eye}{\mathcal{I}}
\newcommand{\eyen}[1]{\eye_{#1}}
\newcommand{\zero}{\mathbf{0}}
\newcommand{\dft}{\mathcal{F}}
\newcommand{\dftn}[1]{\dft_{#1}}
\newcommand{\midx}[1]{\mbox{\scriptsize#1}}
\newcommand{\sci}[2]{#1{\sc e}{-#2}}
\newcommand{\scip}[2]{#1{\sc e}{#2}}
\newcolumntype{C}[1]{>{\centering\let\newline\\\arraybackslash\hspace{0pt}}m{#1}}
\newcommand{\bigo}[1]{\mathcal{O}\left(#1\right)}
\newcommand*\circled[1]{\tikz[baseline=(char.base)]{
            \node[shape=circle,draw,inner sep=1pt] (char) {#1};}}
\newcommand{\faw}[1]{\makebox[1.4em]{$#1$}}
\newcommand{\icarr}[7]{\left[\begin{array}{ccccccc} \faw{#1} & \faw{#2} & \faw{#3} & \faw{#4}
& \faw{#5} & \faw{#6} & \faw{#7} \end{array}\right]}

\newtheorem{theorem}{Theorem}
\newtheorem{definition}{Definition}

\definecolor{lightgray}{gray}{0.90}
\definecolor{lightblue}{rgb}{0.94,0.96,1.0}

\graphicspath{{./figs/}}

\title{Superfast Tikhonov Regularization\\ of Toeplitz Systems}
\author{Christopher~K.~Turnes, Doru~Balcan, Justin~Romberg%
\thanks{C. Turnes and J. Romberg are with the School of Electrical and Computer Engineering at the Georgia Institute of Technology.  
Doru Balcan is with the School of Interactive Computing at the Georgia Institute of Technology.
This work was partially supported by a Packard Fellowship.}}

\begin{document}
\maketitle

\begin{abstract}
Toeplitz-structured linear systems arise often in practical engineering problems.  Correspondingly, a number of algorithms 
have been developed that exploit Toeplitz structure to gain computational efficiency when solving these systems.  The earliest
``fast'' algorithms for Toeplitz systems required $\mathcal{O}(n^2)$ operations, while more recent ``superfast'' algorithms 
reduce the cost to $\mathcal{O}(n\log^2 n)$ or below.

In this work, we present a superfast algorithm for Tikhonov regularization of Toeplitz systems.  
Using an ``extension-and-transformation'' technique, our algorithm translates a Tikhonov-regularized Toeplitz system 
into a type of specialized polynomial problem known as tangential interpolation.  
Under this formulation, we can compute the solution in only $\mathcal{O}(n\log^2 n)$ operations.
We use numerical simulations to demonstrate our algorithm's complexity and verify that it returns 
stable solutions. 
\end{abstract}

\section{Introduction}
\label{sec:intro}

This paper develops a computationally efficient and numerically stable algorithm for solving systems of equations with {\em Toeplitz} structure.  Toeplitz matrices, which arise in problems involving temporally- or spatially-invariant systems, have constant diagonal coefficients $T=[a_{i-j}]$.  This type of structure can be exploited to accelerate the calculation of least-squares solutions, a strategy first adopted in the mid 1900s when Levinson \cite{Levinson:1947}, Durbin \cite{Durbin:1960}, and others \cite{Trench:1964} \cite{Zohar:1969} \cite{Bareiss:1969} developed a series of inversion algorithms that reduced the cost of solving an $n\by n$ Toeplitz system from $\bigo{n^3}$ to only $\bigo{n^2}$ operations.  Algorithms like Levinson's are called {\sl fast}, and they have proven exceedingly useful in a wide variety of signal-processing applications.  A concise but comprehensive review of fast Toeplitz solvers is available in~\cite{Heinig:2011}.  

While these algorithms have small overhead costs and are well-suited for small- to medium-size problems, they require a number of operations an order higher than the number of free parameters in the systems they solve.  Over the past few decades, new algorithms have bridged this gap by inverting Toeplitz matrices in strictly less than $\mathcal{O}(n^2)$ operations (we note in particular \cite{Ammar:1988}, \cite{Barel:2001}, and \cite{Chandrasekaran:2007}), earning the designation ``{\sl superfast}.''  While usually more complicated than their predecessors, superfast algorithms can provide an enormous reduction in computation time for large or very large matrices.

Unfortunately, inversion is rarely practical; the matrices that arise in applications are seldom square and nonsingular.  As a result, many Toeplitz inversion methods have been modified to solve more general least-squares problems.  For example, a superfast pseudoinversion algorithm was given in~\cite{Barel:2003}, where the authors extended the algorithm of~\cite{Barel:2001} to solve rectangular Toeplitz systems with full column rank.  Their extended algorithm applies the pseudoinverse in $\bigo{N\log^2 N}$ operations, where $N$ is the number of free parameters defining the system.  Of course, this is not an isolated development; other notable Toeplitz least-squares solvers include \cite{Heinig:2004}, \cite{Sweet:1984}, \cite{Bini:2003}, and \cite{Pan:2004}, to name a few.

In this work, we further adapt the approach of \cite{Barel:2003} for Tikhonov regularization of Toeplitz systems.  Our results are not unprecedented; a superfast Tikhonov solver based on displacement structure (first introduced in \cite{Kailath:1979}) was presented in \cite{Kimitei:2008}. However, ours takes a fundamentally different approach: we reformulate the system as an interpolation problem instead of using its structure to accelerate matrix factorization. This strategy allows our algorithm to be more general in scope.  Whereas the approach of~\cite{Kimitei:2008} applies only when the regularization penalizes solutions of large Euclidean norm, our algorithm is applicable for any Toeplitz regularizer, easily extends to include a number of different regularization terms, and applies when the Gramian $G_T=T^HT$, but not the matrix $T$ itself, is Toeplitz. 

The remainder of the paper is organized as follows.  In Section~\ref{sec:prob}, we formulate the Tikhonov-regularization problem in a general sense and discuss the specific cases our algorithm addresses.  To conclude the section, we outline our two-part ``extension-and-transformation'' approach.  Subsequently, we develop the first portion of this approach by reframing the regularization problems as partial-circulant-block systems in Section~\ref{sec:extend}.  In Section~\ref{sec:transsolve}, we use Fourier operators to transform these systems into  polynomial-interpolation problems and explain how such problems may be solved efficiently.  We then summarize our algorithm in Section~\ref{sec:algorithm} and discuss implementation issues in Section~\ref{sec:implementation}.  In Section~\ref{sec:simulations}, we give the results of numerical simulations verifying the cost and utility of our algorithm.  Finally, we summarize our developments and detail potential future extensions in Section~\ref{sec:conclusions}.

We have implemented our algorithm as part of a comprehensive code base written in \textsc{MATLAB} and \textsc{C++}.  The code package can be obtained online\footnote{The code package is available at \url{http://users.ece.gatech.edu/~cturnes3}.}, along with scripts that reproduce all of the numerical experiments in Section~\ref{sec:simulations}.

\section{Problem formulation}
\label{sec:prob}

When a system of equations $Tx=b$ is ill-posed, either because the solution $x$ is non-unique or because it is non-existent, a least-squares solution $\hat{x}$ is usually calculated instead.  The solution $\hat{x}$ is designed to minimize the norm of the residual vector $r=b-T\hat{x}$, but might have undesirable properties depending on the characteristics of the matrix $T$. A common technique to avoid this problem is to regularize the system by including a term that penalizes solutions that are not well-behaved.  This approach is known as {\sl Tikhonov regularization}\footnote{The term ``ridge regression'' is used in the field of statistics.}, and is formulated by expressing $\hat{x}$ as the minimizer of the optimization problem
\begin{equation}
\label{eqn:tik}
\hat{x} = \underset{x}{\arg\min}\hspace{5pt}\|Tx-b\|^2+\|Lx\|^2.
\end{equation}
The matrix $L$, which provides the regularization, is referred to as the {\sl Tikhonov matrix} or the {\sl regularizer}.  If a solution to~\eqref{eqn:tik} exists, it is expressed in closed form as
\begin{equation}
\label{eqn:tiksol}
\begin{split}
\hat{x} &= (T^HT+L^HL)^{-1}T^Hb \\
&= (G_T+G_L)^{-1}T^Hb,
\end{split}
\end{equation}
where $G_T$ and $G_L$ are the Gramians of $T$ and $L$, respectively.

In this work, we exploit Toeplitz structure to reduce the number of operations necessary to compute~\eqref{eqn:tiksol}.  Throughout the exposition, we consider three specific scenarios:
\begin{itemize}
\item {\bf General problem}:  $T$ and $L$ are $m\by n$ and $p\by n$ Toeplitz.

\item {\bf $\ell_2$-norm penalization}:  $T$ is $m\by n$ Toeplitz while $L$ is a scaled identity matrix.

\item {\bf Toeplitz-Gramian problem}:  $G_T=T^HT$ is $n\by n$ Hermitian Toeplitz while $L$ is $p\by n$ Toeplitz.

\end{itemize}
In fact, our approach can be applied to a much larger class of problems as well; these three simply represent those cases that appear to be most interesting.

To solve these problems, we present an algorithm that calculates~\eqref{eqn:tiksol} with superfast complexity $\bigo{N\log^2 N}$, where $N$ is the number of free parameters.  Our algorithm can be divided into three stages.  First, we decouple the Gramian matrices $G_T$ and $G_L$ to express the problem as a Toeplitz-block system.\footnote{For the Toeplitz-Gramian problem, we need only decouple the matrix $G_L$.} Next, we follow the ``extension-and-transformation'' approach of~\cite{Heinig:1996} to translate our linear-algebraic problems into the context of a polynomial problem known as {\sl tangential interpolation}~\cite{Fechina:1975}.  Finally, using strategies similar to those of~\cite{Barel:2003}, we solve the tangential-interpolation problems in $\bigo{N\log^2 N}$ to compute the solutions $\hat{x}$.  

\section{Extensions of Tikhonov systems}
\label{sec:extend}

Our algorithm reformulates linear-algebraic problems as polynomial problems. The first step in achieving this change of context is to expand the linear systems by adding block rows and columns.  We motivate this process by demonstrating how a Toeplitz system may be extended to form an equivalent partial-circulant system.  We then use this technique to extend the Tikhonov systems of the three problems, expressing them as partial-circulant-block systems.  As a point of notation, we will indicate block row and column sizes outside of matrix brackets to better communicate submatrix sizes.

\subsection{Circulant extensions of Toeplitz matrices}
\label{ssec:circextend}

In this section, we present a simple extension of Toeplitz matrices into circulant submatrices.  This technique can be used to replace a Toeplitz problem with a larger circulant-like system, but will not add or remove any information.  Instead, it will yield a form that is more easily manipulable.

Consider the linear system $Tx=b$, where $T=[a_{i-j}]$ is $m\by n$ Toeplitz.  Our objective is to find a system with circulant structure that we might solve in place of the original problem. For any $k\geq 0$, we define an integer $N_k:=m+n+k-1$.  If $\overline{T}$ is the $(n+k-1)\by n$ Toeplitz matrix 
\[ 
\overline{T} = 
\begin{bmatrix}
a_{-n-k} & a_{m-1} & \cdots & a_{m-n+1} \\
a_{-n-k+1} & a_{-n-k} & \cdots & a_{m-n+2} \\
\vdots & \vdots & \ddots & \vdots \\
a_{-1} & a_{-2} & \cdots & a_{-n}
\end{bmatrix},
\]
where the coefficients $a_i$ are arbitrary for $i\leq -n$, then the $N_k\by n$ matrix
\begin{equation}
\label{eqn:circexp}
C_k(T) = \begin{blockarray}{c@{\hspace{5pt}}r@{\hspace{5pt}}cl}
 & \midx{n} & & \\
 \begin{block}{[c@{\hspace{5pt}}r@{\hspace{5pt}}c]l}
 & \overline{T} & & \midx{n+k-1} \\
 & T & & \midx{m} \\
 \end{block}
\end{blockarray}
\end{equation}
consists of the first $n$ columns of the $N_k\by N_k$ circulant matrix
\[ \mathrm{circ}_k(T)=\begin{bmatrix}
a_{-n-k} & \cdots &  a_{-n-k+1}\\
\vdots & \ddots & \vdots \\
a_{m-1} & \cdots & a_{-n-k}
\end{bmatrix}.\]  
We call $\overline{T}$ the {\sl extension matrix} of $T$, while $C_k(T)$ is the {\sl $k$-circulant extension of $T$}.

The matrix $T$ in the original linear system cannot simply be replaced by its circulant extension $C_k(T)$, as this would introduce extra rows to the system without accounting for them on the right-hand side of the equation. Instead, we must also add an equal number of columns, which we generate by defining an artificial unknown $\gamma=-\overline{T}x$.\footnote{It is ``artificial'' because it adds no new information to the system.}  Using $\gamma$, if $\eyen{n+k-1}$ is the $(n+k-1)\by (n+k-1)$ identity matrix, the linear system $Tx=b$ is equivalent to
\begin{equation}
\label{eqn:expand1}
\bbordermatrix{
 & \midx{n} & \midx{n+k-1} \cr
\midx{n+k-1}\quad & \overline{T} & \eyen{n+k-1} \cr
\midx{m} & T & \zero \cr
}
\begin{bmatrix}
x \\ \gamma 
\end{bmatrix} = 
\begin{bmatrix} 
\zero \\ b 
\end{bmatrix}.
\end{equation}
The expansion adds just as many columns as rows, so the new system is as over- or underdetermined as the original.

The blocks of~\eqref{eqn:expand1} can be grouped into partial-circulant matrices.  Let the matrix subscript $\Gamma_\ell$ represent the submatrix formed from the first $\ell$ columns of a matrix.  For example, the $k$-circulant extension $C_k(T)$ can be written as $\left(\mathrm{circ}_k(T)\right)_{\Gamma_n}$.  Using this notation, and grouping the block rows of~\eqref{eqn:expand1} together, each block column is a circulant submatrix:
\begin{equation}
\label{eqn:expand2}
\begin{bmatrix} C_k(T) & \left(\eyen{N_k}\right)_{\Gamma_{n+k-1}} \end{bmatrix}
\begin{bmatrix} 
x \\ \gamma \end{bmatrix} = 
\begin{bmatrix} \zero \\ b \end{bmatrix}. \nonumber
\end{equation}

This grouping is advantageous; circulant matrices are easier to manipulate than Toeplitz matrices, as they are diagonalized by the Fourier matrix.  Namely, if $\dftn{N_k}$ is the $N_k\by N_k$ Fourier matrix, the matrix $C_k(T)$ may be decomposed as 
\begin{equation}
\label{eqn:circdiag} 
C_k(T) = \left(\dftn{N_k}^H\Lambda\dftn{N_k}\right)_{\Gamma_n}=\dftn{N_k}^H\Lambda\left(\dftn{N_k}\right)_{\Gamma_n}
\end{equation}
for some $N_k\by N_k$ diagonal matrix $\Lambda$. Of course, there exists a similar decomposition for the block $\left(\eyen{N_k}\right)_{\Gamma_{n+k}}$ as well.  To avoid compounding subscripts as in~\eqref{eqn:circdiag}, we will omit those indicating the size of the Fourier matrices, as they can be easily inferred from context.  For instance, we express~\eqref{eqn:circdiag} as 
\[ C_k(T) = \dft^H\Lambda\dft_{\Gamma_n}.\]

\subsection{Extension for the general problem}
\label{ssec:toeptik}

We now apply the technique of Section~\ref{ssec:circextend} to the Tikhonov system in~\eqref{eqn:tiksol} for the general problem.  First, we define two artificial variables $\sigma_1:=T\hat{x}$ and $\sigma_2:=L\hat{x}$, which allow us to decouple the Gramian matrices:
\begin{eqnarray*}
 T^Hb  &=& T^H(Tx)+L^H(Lx)\\
&=& T^H\sigma_1 + L^H\sigma_2.
\end{eqnarray*}
Using these variable definitions,~\eqref{eqn:tiksol} may be replaced by a system containing only Toeplitz blocks:
\begin{equation}
\label{eqn:ex1}
\begin{bmatrix}
\zero & T^H  & L^H \\
-T & \eyen{m} & \zero \\
 -L & \zero & \eyen{p}\\
\end{bmatrix}\begin{bmatrix}
\hat{x} \\
\sigma_1 \\
\sigma_2 \\
\end{bmatrix} = \begin{bmatrix} T^Hb \\ \zero \\ \zero \end{bmatrix}.
\end{equation}
\noindent Since all of the blocks of this system are Toeplitz, we can extend them as in~\eqref{eqn:expand1}, using $1$-circulant extensions for simplicity.  Since the extension size must be large enough that all of the blocks become circulant submatrices, the first block row of the extended system will have $q+n$ total rows, where $q=\max\left(m,p\right)$. 

If we are to add the extensions $\overline{T^H}$, $\overline{L^H}$, $\overline{T}$, and $\overline{L}$ to our system, we must introduce three corresponding artificial variables to compensate for the extra rows:
\begin{align*}
\gamma_1 &:= -\overline{T^H}\sigma_1-\overline{L^H}\sigma_2, &
\gamma_2 &:= \overline{T}\hat{x}, &
\gamma_3 &:= \overline{L}\hat{x}.
\end{align*}
With these variables defined, and letting $\hat{\bm{b}}=-T^Hb$, the system of~\eqref{eqn:ex1} has the equivalent form
\begin{equation}
\edef\savedbaselineskip{\the\baselineskip\relax}
\begin{array}{ccl}
\baselineskip=\savedbaselineskip
\bbordermatrix{
~ & \midx{n} & \midx{m} & \midx{p} & \midx{q} & \midx{n} & \midx{n} & \midx{1} \cr
\midx{q} &  \zero & \overline{T^H} & \overline{L^H} & \eyen{q} & \zero & \zero & \zero\cr
\midx{n} &  \zero & T^H &L^H & \zero & \zero & \zero & \hat{\bm{b}} \cr
\midx{n} &  -\overline{T} & \zero &\zero & \zero & \eyen{n} & \zero & \zero   \cr
\midx{m} &  -T & \eyen{m} &\zero & \zero & \zero & \zero & \zero \cr
\midx{n} &  -\overline{L} & \zero & \zero & \zero & \zero & \eyen{n} & \zero \cr
\midx{p} &  -L & \zero &\eyen{p} & \zero & \zero & \zero & \zero
} & \begin{bmatrix}
\hat{x} \\
\sigma_1 \\
\sigma_2 \\
\gamma_1 \\
\gamma_2 \\
\gamma_3 \\
1
\end{bmatrix} & =\zero\vspace{5pt} \\ 
\bm{C} & \bm{p}^* &= \zero,
\end{array}
\label{eqn:ex2}
\end{equation}
where we have moved the right-hand side vector into the matrix to generate a homogeneous system of equations.  We have also expressed the unknown variables as a single block column vector $\bm{p}^*$ to simplify future expressions. 

Grouping the blocks of the matrix $\bm{C}$ together to form circulant submatrices, we may replace the original system of~\eqref{eqn:tiksol} with the much larger system
\begin{equation}
\label{eqn:ex3}
\bbordermatrix{
~ & \midx{n} & \midx{m} & \midx{p} & \midx{q} & \midx{n} & \midx{n} & \midx{1} \cr
\midx{q+n} &  \zero & C_{12} & C_{13} & C_{14} & \zero & \zero & C_{17} \cr
\midx{m+n} & C_{21} & C_{22} & \zero & \zero & C_{25} & \zero & \zero \cr
\midx{p+n} &  C_{31} & \zero & C_{33} & \zero & \zero & C_{36} & \zero \cr
}\bm{p}^*=\zero.
\end{equation}
The $\set{C_{ij}}$ are circulant submatrices, and can be factored with Fourier matrices of the appropriate sizes.

\subsection{Extension for $\ell_2$-norm penalization}
\label{ssec:toepeuc}

The system in~\eqref{eqn:ex3} may be simplified if the Tikhonov matrix is a scaled identity matrix ({\sl i.e.}, if $L=\beta \eyen{n}$ for some $\beta\in\mathbb{C}$).  Specifically, it may be drastically reduced in size by observing that the extension matrix for $L$ is $\zero$ if all arbitrary entries are set to zero.  Using this fact, the extended system for $\ell_2$-norm penalization is given by
\begin{equation}
\label{eqn:ex5}
\bbordermatrix{
~ & \midx{n} & \midx{m} & \midx{n} & \midx{q} & \midx{n} & \midx{n} & \midx{1} \cr
\midx{q} &  \zero & \overline{T^H} & \zero & \eyen{q} & \zero & \zero & \zero\cr
\midx{n} &  \zero & T^H & \beta^*\eye_n & \zero & \zero & \zero & \hat{\bm{b}} \cr
\midx{n} &  -\overline{T} & \zero &\zero & \zero & \eyen{n} & \zero & \zero   \cr
\midx{m} &  -T & \eyen{m} &\zero & \zero & \zero & \zero & \zero \cr
\midx{n} & \zero & \zero & \zero & \zero & \zero & \eyen{n} & \zero \cr
\midx{n} & -\beta\eyen{n} & \zero &  \eyen{n} & \zero & \zero & \zero & \zero \cr
}\begin{bmatrix}
\hat{x} \\
\sigma_1 \\
\sigma_2 \\
\gamma_1 \\
\gamma_2 \\
\gamma_3 \\
1
\end{bmatrix}=\zero,
\end{equation}
where $\beta^*$ is the complex conjugate of $\beta$.  

The fifth block row of~\eqref{eqn:ex5} implies $\gamma_3=\zero$, while the sixth implies $\beta\hat{x}=\sigma_2$. As a result, we can eliminate both $\gamma_3$ and $\sigma_2$ as variables. Since $\sigma_2$ has been removed, the only matrix with a non-zero extension in the first block row is $T^H$, and therefore we set $q=m$ regardless of whether or not $m<n$.  

After reducing the number of unknowns, we are left with the following simplified system:
\[
\bbordermatrix{
~ & \midx{n} & \midx{m} & \midx{m} & \midx{n} & \midx{1} \cr
\midx{m} & \zero & \overline{T^H} & \eyen{m} & \zero & \zero \cr
\midx{n} & |\beta|^2\eyen{n}& T^H & \zero & \zero  & \hat{\bm{b}}\cr
\midx{n} &  -\overline{T} & \zero & \zero & \eyen{n} & \zero \cr
\midx{m} &  -T &  \eyen{m} & \zero & \zero & \zero \cr
}\begin{bmatrix}
\hat{x} \\
\sigma_1 \\
\gamma_1 \\
\gamma_2 \\
1
\end{bmatrix}=\bm{C}\bm{p}^*=\zero.
\]
Again grouping the Toeplitz components together with their extension matrices, we can express this as a system with partial-circulant blocks:
\begin{equation}
\label{eqn:ex6} 
\bbordermatrix{
~ & \midx{n} & \midx{m} & \midx{m} & \midx{n} & \midx{1} \cr
\midx{m+n} & C_{11} & C_{12} & C_{13} & \zero & C_{15} \cr
\midx{m+n} & C_{21} & C_{22} & \zero & C_{24} & \zero \cr
}\bm{p}^*=\zero.
\end{equation}
This system is significantly smaller than that of~\eqref{eqn:ex3}, and it can be solved more efficiently.

\subsection{Extension for the Toeplitz-Gramian problem}
\label{ssec:toepgram}

If the Gramian matrix $G_T=T^HT$ is Toeplitz, the Tikhonov system can be extended in a manner similar to the developments of Section~\ref{ssec:toeptik}. However, this is not always a wise approach; if $L$ is a scaled identity matrix, the matrix $(G_T+L^HL)$ is also Toeplitz, meaning the problem may be solved by the direct-inversion algorithm of~\cite{Barel:2001} (or numerous alternatives). Therefore, when we consider the Toeplitz-Gramian problem, we assume without loss of generality that $L$ is a generic $p\by n$ Toeplitz matrix.

For this problem, we need only introduce a single artificial variable $\sigma=Lx$.  Using the two equations
\begin{eqnarray*}
G_Tx + L^H\sigma &=& T^Hb,\cr
-Lx + \sigma &=& 0,
\end{eqnarray*}
we can perform $1$-circulant extensions on $G_T$, $L^H$, and $L$ to obtain the block system
\begin{equation}
\label{eqn:gramblk}
\bbordermatrix{
~ & \midx{n} & \midx{p} & \midx{q} & \midx{n} & \midx{1} \cr
\midx{q} & \overline{G} & \overline{L^H} & \eyen{q} & \zero & \zero \cr
\midx{n} & G & L^H & \zero & \zero & \hat{\bm{b}} \cr
\midx{n} & -\overline{L} & \zero & \zero & \eyen{n} & \zero \cr
\midx{p} & -L & \eyen{p} &\zero & \zero & \zero \cr
}\begin{bmatrix}
x \\
\sigma \\
\gamma_1 \\
\gamma_2 \\
1
\end{bmatrix}=\bm{C}\bm{p}^*=\zero. \nonumber
\end{equation}
Since we are working with the Gramian of $T$ rather than $T$ itself, $q=\max\left(n,p\right)$ .

Grouping the component blocks together yields the partial-circulant-block system:
\begin{equation}
\label{eqn:ex7} 
\bbordermatrix{
~ & \midx{n} & \midx{p} & \midx{q} & \midx{n} & \midx{1} \cr
\midx{q+n} & C_{11} & C_{12} & C_{13} & \zero & C_{15} \cr
\midx{p+n} & C_{21} & C_{22} & \zero & C_{24} & \zero \cr
}\bm{p}^*=\zero.
\end{equation}
This is an identical formulation to the $\ell_2$-norm-penalization problem, but with the circulant blocks $\set{C_{ij}}$ defined differently (and with different matrix sizes).  Again, compared to the system of~\eqref{eqn:ex3}, the system of~\eqref{eqn:ex7} is significantly smaller and simpler to solve.

\section{Transforming and solving block circulant systems}
\label{sec:transsolve}

In this section, we detail the ``transformation'' part of our approach, using Fourier operators to translate the systems of~\eqref{eqn:ex3},~\eqref{eqn:ex6}, and~\eqref{eqn:ex7} into the language of polynomials.  This new formulation allows us to solve interpolation problems rather than linear-algebraic problems, and is based on the framework of~\cite{Barel:1992}.  Our approach is not confined to the examples we consider; any nonsingular system having only partial-circulant blocks may be solved in a similar way.

We consider only the general problem for the rest of the analysis, as the requisite adaptations for $\ell_2$-norm penalization and the Toeplitz-Gramian problem are straightforward.  While there are fewer interpolation conditions for the latter two problems, their solution methods are virtually identical and all theoretical results extend easily.  We also assume that $m=p=q$ for simplicity, though this assumption is not fundamental to our results.

\subsection{Transformation}
\label{ssec:transform}

After extending the systems, we place them in the context of polynomials by applying Fourier operators.  Let $N=m+n$ be the number of rows in each block row of~\eqref{eqn:ex3}.   Using the diagonal decomposition of~\eqref{eqn:circdiag}, we can left-multiply by the $3N\by 3N$ Fourier-block operator
\[ \eye_3\otimes\mathcal{F}=\begin{bmatrix}
\dft & \zero & \zero \\
\zero & \dft & \zero \\
\zero & \zero & \dft
\end{bmatrix},
\]
transforming the homogeneous system into the form
\begin{equation}
\label{eqn:ex4}
\bbordermatrix{
~ & \midx{N} & \midx{N} & \midx{N} & \midx{N} & \midx{N} & \midx{N} & \midx{N} \cr
\midx{N} &  \zero & \Lambda_{12} & \Lambda_{13} & \Lambda_{14} & \zero & \zero & \Lambda_{17} \cr
\midx{N} & \Lambda_{21} & \Lambda_{22} & \zero & \zero & \Lambda_{25} & \zero & \zero \cr
\midx{N} & \Lambda_{31} & \zero & \Lambda_{33} & \zero & \zero & \Lambda_{36} & \zero \cr
}\bm{F}\bm{p}^*=\zero,
\end{equation}
where $\Lambda_{ij}=\text{diag}(\lambda_{ij}^{(0)},\ldots,\lambda_{ij}^{(N-1)})$ is the diagonal matrix in the factorization of $C_{ij}$ and $\bm{F}$ is the block-diagonal matrix
\[ \bm{F} = \diag{\mathcal{F}_{\Gamma_n},\mathcal{F}_{\Gamma_m},\mathcal{F}_{\Gamma_m},\mathcal{F}_{\Gamma_m},\mathcal{F}_{\Gamma_n},\mathcal{F}_{\Gamma_n},\mathcal{F}_{\Gamma_1}}.\]

The transformation allows us to replace~\eqref{eqn:ex3} with a set of polynomial interpolation conditions.  The key to such a shift in perspective is to equate a vector $\rho=[\rho_i]$ with a polynomial $\rho(z)=\sum_i \rho_iz^{i-1}$. Under this equivalence, the product
\begin{equation}
\mathcal{F}\rho = \begin{bmatrix}
\omega_0^0 & \cdots & \omega_0^{N-1} \\
 \vdots & \ddots & \vdots \\
\omega_{N-1}^0 & \cdots & \omega_{N-1}^{N-1}
\end{bmatrix}\begin{bmatrix}
\rho_0 \\ \vdots \\ \rho_{N-1} \end{bmatrix}=\begin{bmatrix}
\rho(\omega_0) \\
\vdots \\
\rho(\omega_{N-1})
\end{bmatrix},
\label{eqn:vandprod}
\end{equation}
where $\omega_k=\ema^{\jma 2\pi k/N}$, produces a vector of the polynomial evaluations $[\rho(\omega_i)]$.

The components of $\bm{p}^*$ in~\eqref{eqn:ex4} correspond to polynomials $\set{\hat{x}(z),\sigma_1(z),\sigma_2(z),\gamma_1(z),\gamma_2(z),\gamma_3(z),1}$.  Applying the equivalence of~\eqref{eqn:vandprod}, the product $\bm{F}\bm{p}^*$ may be treated as an unknown vector containing the values of these polynomials at the nodes $\omega_k$.  Since the diagonal matrices $\Lambda_{ij}$ scale these values,~\eqref{eqn:ex4} is equivalent to a set of $3N$ interpolation conditions:
\begin{equation}
\begin{aligned}
&\lambda_{12}^{(k)}\sigma_1(\omega_k)+\lambda_{13}^{(k)}\sigma_2(\omega_k)+\lambda_{14}^{(k)}\gamma_1(\omega_k)+\lambda_{17}^{(k)} = 0, \\
&\lambda_{21}^{(k)}\hat{x}(\omega_k)+\lambda_{22}^{(k)}\sigma_1(\omega_k)+\lambda_{25}^{(k)}\gamma_2(\omega_k) = 0,\hspace{4pt}\text{and}\\
&\lambda_{31}^{(k)}\hat{x}(\omega_k)+\lambda_{33}^{(k)}\sigma_2(\omega_k)+\lambda_{36}^{(k)}\gamma_3(\omega_k) = 0
\end{aligned}
\label{eqn:intcond}
\end{equation}
for $k=0,\ldots,N-1$.  
These equations are markedly different from those of a standard interpolation problem.  Rather than prescribing individual polynomial values at each node, the conditions define values of weighted sums of polynomials. 

To solve this type of problem, we gather the unknown polynomials into a vector polynomial $p^*(z)\in\mathbb{C}[z]^{7\by 1}$. Defining the interpolation conditions
\begin{align*}
F_k &= \icarr{0}{\lambda_{1,2}^{(k)}}{\lambda_{1,3}^{(k)}}{\lambda_{1,4}^{(k)}}{0}{0}{\lambda_{1,7}^{(k)}} \\
G_k &= \icarr{\lambda_{2,1}^{(k)}}{\lambda_{2,2}^{(k)}}{0}{0}{\lambda_{2,5}^{(k)}}{0}{0},\text{ and } \\
H_k &= \icarr{\lambda_{3,1}^{(k)}}{0}{\lambda_{3,3}^{(k)}}{0}{0}{\lambda_{3,6}^{(k)}}{0},
\end{align*}
the solution $p^*(z)$ has (component-wise) degree
\begin{equation} 
\label{eqn:degcon} 
\degr{p^*} < \begin{bmatrix} n & m & m & m & n &  n & 1 \end{bmatrix}^T 
\end{equation}
and satisfies
\begin{equation} 
\label{eqn:interp}
F_kp^*(\omega_k)=G_kp^*(\omega_k)=H_kp^*(\omega_k)=0\end{equation}
for $k=0,\ldots,N-1$.  Equations in the form of~\eqref{eqn:interp} are known as {\sl tangential-interpolation} conditions.

\subsection{Tangential interpolation}
\label{ssec:frameor}

After transformation, we can compute~\eqref{eqn:tiksol} by tangentially interpolating a polynomial vector from the conditions in~\eqref{eqn:interp}.  The interpolation can be calculated efficiently with the algorithm first described in~\cite{Barel:1994} and later improved in~\cite{Barel:1998}. To introduce this method, we present several components of the theoretical framework of~\cite{Barel:1992}. As our goal for this section is to provide an overview, we omit many technical details.

We begin by defining an algebraic context for the problem. The solution $p^*(z)$ is an element of $\mathbb{C}[z]^{7\by 1}$, the space of all $7\by 1$ vector polynomials with complex coefficients.  The set $\mathbb{C}[z]^{7\by 1}$ is a {\sl module}, a more abstract form of a vector space.  It is similar in nature to the vector space $\mathbb{C}^{7\by 1}$, but its elements are vectors of complex-valued polynomials.

The vector polynomial $p^*(z)$ is not an arbitrary element of $\mathbb{C}[z]^{7\by 1}$; it also satisfies the interpolation conditions in~\eqref{eqn:intcond}. We therefore limit our search for $p^*(z)$ to the much smaller set of elements with this property. To formally establish this set, we define the {\sl tangential-interpolation residual}.  
\begin{definition}[Tangential-interpolation residual]
\label{def:residual}
The residual of a vector polynomial $q(z)\in\mathbb{C}[z]^{7\by 1}$ relative to the interpolation conditions $\set{F_k,G_k,H_k}$ is 
\[ (r(q))_k\equiv\begin{bmatrix}
F_k \\
G_k \\
H_k \end{bmatrix}q(\omega_k).
\]
\end{definition}
\noindent With this definition, the set of all vector polynomials satisfying the interpolation conditions is given as
\[ \mathcal{S} := \set{\ p(z)\in\mathbb{C}[z]^{7\by 1}\ :\ \left(r(p)\right)_k=\zero\hspace{10pt}\forall k}. \] 
This set is a {\sl submodule} of $\mathbb{C}[z]^{7\by 1}$, and characterizes the null space of the matrix in~\eqref{eqn:ex4}. 

The linear equations of~\eqref{eqn:ex4} form an underdetermined homogeneous system, and have an infinite number of solutions. Correspondingly, the submodule $\mathcal{S}$ is infinite, and not all of its elements are related to the solution $p^*(z)$.  For example, $(z^N-1)q(z)\in\mathcal{S}$ for {\sl any} element $q(z)\in\mathbb{C}[z]^{7\by 1}$. To compute the interpolation, then, we need to differentiate $p^*(z)$ from the irrelevant elements of $\mathcal{S}$. 

Fortunately, the known degree structure of $p^*(z)$ sets it apart.  Since the vector $\bm{p}^*$ is the {\sl unique} solution to~\eqref{eqn:ex2}, the only elements of $\mathcal{S}$ that satisfy~\eqref{eqn:degcon} are those of the form $\alpha p^*(z)$, where $\alpha\in\mathbb{C}$.  As a result, if we can find an element of $\mathcal{S}$ with the proper degree structure, the solution $p^*(z)$ may be calculated with simple scaling.

To analyze the degree structure of the elements of $\mathcal{S}$, we use a tool known as the {\sl $\tau$-degree}~\cite{Barel:1992}.\footnote{In its original form (in~\cite{Barel:1992}), the $\tau$-degree was referred to as the $\vec{s}$-degree. This terminology was later changed in~\cite{Barel:2001} and~\cite{Barel:2003}.}
\begin{definition}[$\tau$-degree]
\label{def:taudeg}
Let $\tau\in\mathbb{Z}^7$.  The $\tau$-degree of a vector polynomial $p(z)\in\mathbb{C}[z]^{7\by 1}$ is the integer $\delta\in\mathbb{Z}$ given by
\begin{equation}
\label{eqn:taudeg}
\delta=\tdg{p} = \underset{i}{\max}\left(\degr{p_i}-\tau_i\right),
\end{equation}
where $\degr{0}=-\infty$.  
\end{definition}
\noindent The $\tau$-degree is the maximum polynomial degree in a vector polynomial after each of its components have been ``shifted'' by some set amount.   It is represented visually in Fig.~\ref{fig:taudeg}. For different choices of $\tau$, the $\tau$-degree may be different (and the components determining the $\tau$-degree may vary as well).

\begin{figure}[!t]
\centering
\begin{tabular}{ccc}
\psfrag{Component}{\raisebox{-3pt}{\fontsize{10}{11.2}\selectfont Component}}
\psfrag{deg pi}{\raisebox{3pt}{\fontsize{10}{11.2}\selectfont $\degr{p_i}$}}
\includegraphics[width=0.3\columnwidth]{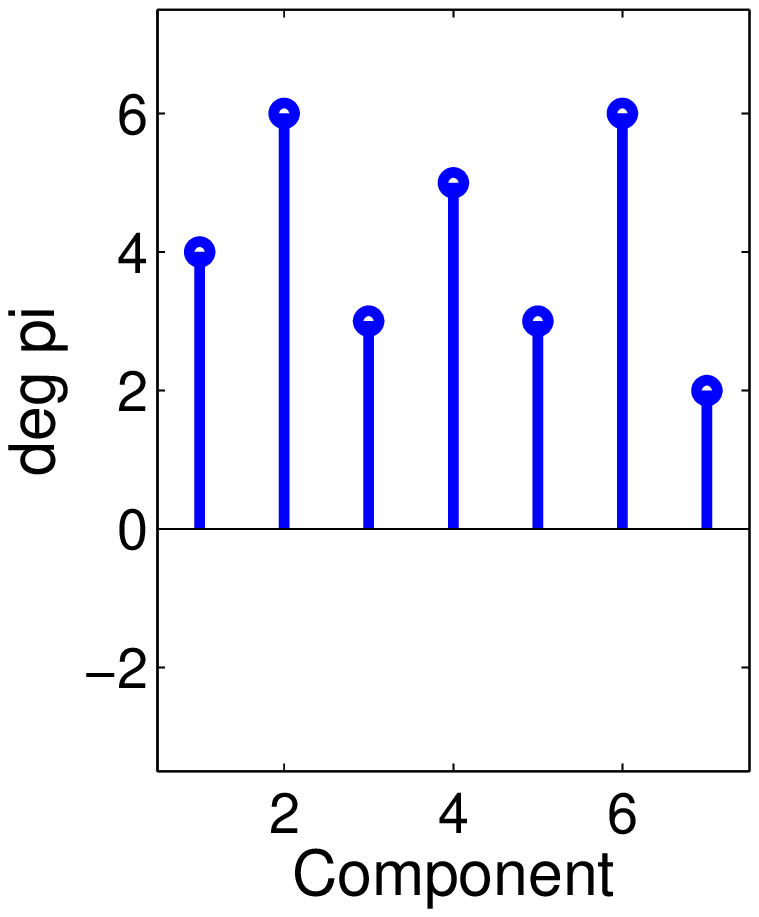} &
\psfrag{Component}{\raisebox{-3pt}{\fontsize{10}{11.2}\selectfont Component}}
\psfrag{taui}{\raisebox{3pt}{\fontsize{10}{11.2}\selectfont $\tau_i$}} 
\includegraphics[width=0.3\columnwidth]{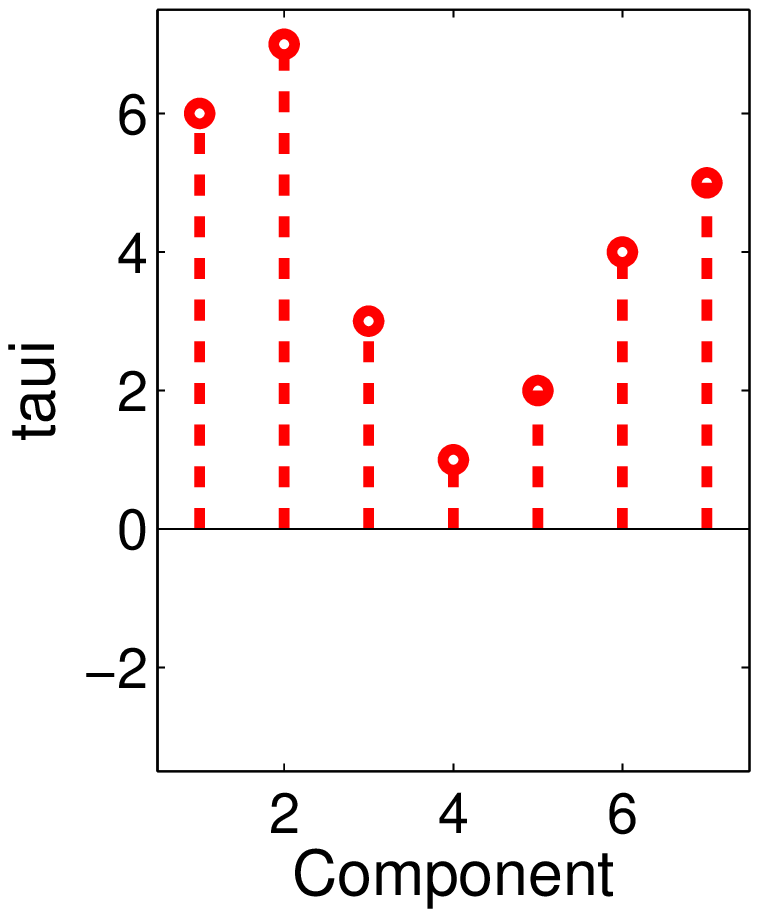} & 
\psfrag{Component}{\raisebox{-3pt}{\fontsize{10}{11.2}\selectfont Component}}
\psfrag{deg pi - taui}{\raisebox{3pt}{\fontsize{10}{11.2}\selectfont $\degr{p_i}-\tau_i$}}
\includegraphics[width=0.3\columnwidth]{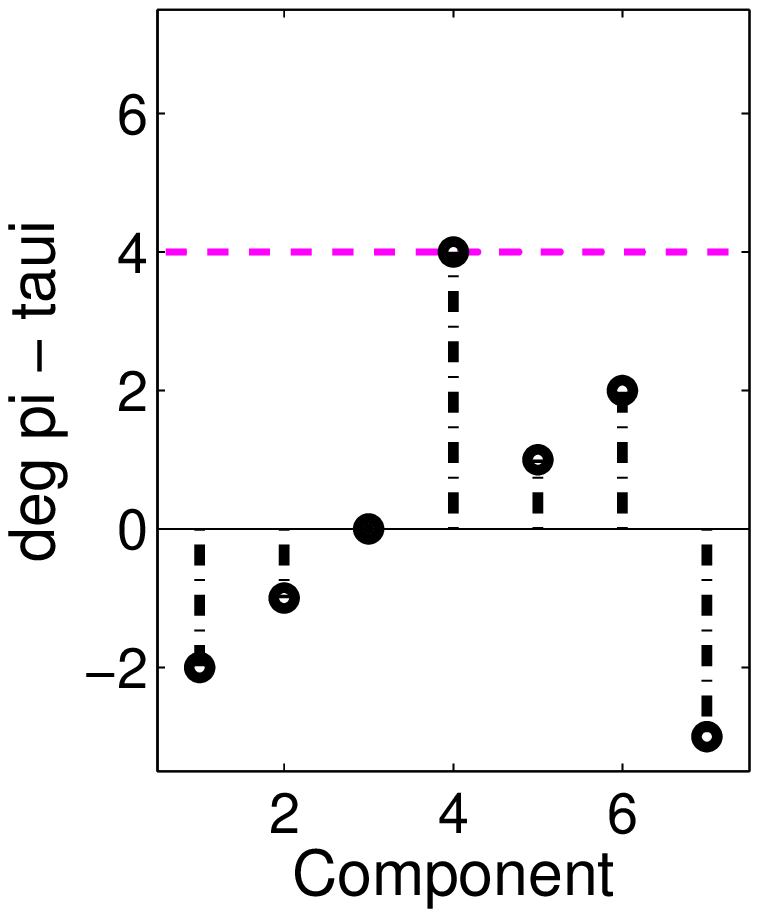} \\
\hspace{10pt}(a) & \hspace{10pt}(b) & \hspace{10pt}(c)
\end{tabular}
\caption{\small\sl A visual representation of the $\tau$-degree: (a) polynomial degrees of each component $p_i(z)$ of a $7\by 1$ polynomial vector $p(z)$; (b) individual components of a sample $\tau$ vector; (c) the degrees of $p_i(z)$ shifted by the $\tau$ values. In (c), the $\tau$-degree is the maximum of the shifted degree values, and is represented by a dashed line. }
\label{fig:taudeg}
\end{figure}

As Fig.~\ref{fig:taudeg} illustrates, the $\tau$-degree is parametric; its value for a fixed $p(z)$ depends on the parameters $\set{\tau_i}$.  This property allows the $\tau$-degree to reflect how closely an element of $\mathcal{S}$ matches $p^*(z)$ in degree structure. To illustrate, we can consider the $\tau$-degree in standard polynomial interpolation.  Suppose we wish to determine the polynomial $u(z)$ of minimal degree that satisfies
\[ u(\omega_k)=f_k\hspace{20pt}\text{for}\hspace{20pt}k=1,\ldots,K.\]
Since there are $K$ interpolation points, $\degr{u}=K-1$.  For a given polynomial $v(z)$ that satisfies the conditions, if $\tau=K-1$ and $\tdg{v}=k$, there are three possibilities: 
\begin{itemize}
\item $k<0$: $v(z)$ will not obey all conditions in general;
\item $k=0$: $v(z)=u(z)$; or 
\item $k>0$: $v(z)$ is not of minimal degree.
\end{itemize}

Next, consider the vector case.  Letting $\tau=[\tau_i]$, where
\begin{equation} 
\tau_i= \begin{cases}
m-1 & i=2,3,4 \\
n-1 & i=1,5,6 \\
0 & i=7 
\end{cases},
\label{eqn:taudef} 
\end{equation}
it follows that $\tdg{p^*}=0$, indicating that $p^*(z)$ has the desired degree structure.  Note that it is not the {\sl specific} values of $\set{\tau_i}$ that matter, but the {\sl relative values}.  For instance, if $\tilde{\tau}=\tau+\bm{1}$, then a $\tilde{\tau}$-degree $\delta=-1$ corresponds to the proper degree structure. 

Since $p^*(z)$ is the only element of $\mathcal{S}$ that satisfies~\eqref{eqn:degcon}, computing the Tikhonov-regularized solution amounts to finding an element of $\mathcal{S}$ with $\tau$-degree $\delta=0$ for $\tau$ as in~\eqref{eqn:taudef}. To find such an element, we make use of a special algebraic property of $\mathcal{S}$: it is {\sl free}, meaning it has a basis. 

Submodule bases play a role similar to their linear-algebraic counterparts, allowing elements of the submodule to be described through expansions. Namely, a set $B(z)=\set{B^{(1)}(z),\ldots,B^{(k)}(z)}$ is a basis for $\mathcal{S}$ if, for every element $p(z)\in\mathcal{S}$, there are {\em unique} polynomials $\alpha_i(z)\in\mathbb{C}[z]$ such that $p$ can be written
\begin{equation}
\label{eqn:basisexp} 
p(z) = \dsum{i=1}{k}{\alpha_i(z)B^{(i)}(z)}.
\end{equation}
The $\set{\alpha_i(z)}$ serve as ``expansion polynomials'' and~\eqref{eqn:basisexp} as a basis expansion.  As is the case for linear subspaces, any element of $\mathcal{S}$ is entirely (and uniquely) described by its $\alpha_i(z)$ for a chosen basis, and the number of bases is infinite.

By Theorem 3.1 of~\cite{Barel:1992}, a basis for $\mathcal{S}$ has exactly seven elements $B^{(j)}(z)\in\mathbb{C}[z]^{7\by 1}$, which may be gathered into the basis matrix polynomial $B(z)=\left[ B^{(1)}(z) \hspace{5pt} \cdots \hspace{5pt}  B^{(7)}(z) \right]$. While we do not explore their theoretical properties, submodule bases are an essential component of our algorithm; we will determine $p^*(z)$ by constructing a basis for $\mathcal{S}$ that has an element with $\tau$-degree $\delta=0$.  Once we build such a basis, the solution $p^*(z)$ will be immediate.

More specifically, to ensure that our basis has an element with $\tau$-degree $\delta=0$, we will construct what is known as a {\sl $\tau$-reduced} basis.  A $\tau$-reduced basis has elements that act ``linearly independent'' relative to the $\tau$-degree, meaning that linear combinations of the basis elements can not decrease the $\tau$ degree.  In other words, if $\delta$ is the maximum $\tau$-degree among the elements of the basis, then there is no linear combination
\[ q(z)=\sum_i \alpha_i(z)B^{(i)}(z)\]
such that $\tdg{q}<\delta$ other than $\alpha_i(z)=0$.

Before describing how we construct a $\tau$-reduced basis, we close the section with two remarks.  First, it is important to note that an arbitrary set of $\tau$-reduced elements of $\mathcal{S}$ is not necessarily a basis; it must also span $\mathcal{S}$.  Second, we construct a $\tau$-reduced basis for $\mathcal{S}$ not out of necessity, but because we can devise an efficient algorithm to do so.  Any efficient method for determining an element of $\mathcal{S}$ with $\tau$-degree $\delta=0$ would suffice.

\subsection{Basis construction}
\label{ssec:construct}

In this section, we detail an algorithm for constructing a $\tau$-reduced basis corresponding to the tangential-interpolation problem. We begin with the following theorem, which allows us to subdivide the process.
\begin{theorem}
\label{thm:subdivide}
Let $\sigma_1,\ldots,\sigma_K$ be interpolation nodes corresponding to the vectors $\phi_1,\ldots,\phi_K\in\mathbb{C}^{1\by 7}$, where the conditions $\set{(\sigma_k,\phi_k)}$ are mutually distinct and $\phi_k\neq\zero^T$ $\forall k$.  For some $1\leq\kappa\leq K$ and $\tau_K\in\mathbb{Z}^{7\by 1}$, let $B_\kappa(z)\in\mathbb{C}[z]^{7\by 7}$ be a $\tau_K$-reduced basis corresponding to the interpolation data $\set{ (\sigma_k,\phi_k)\ :\ k=1,\ldots,\kappa }.$

Denote $\delta_i=\tau_K\mbox{--}\degr{B^{(i)}_\kappa}$ for $i=1,\ldots, 7$, and define 
\[\tau_{\kappa\rightarrow K}=-\begin{bmatrix} \delta_1 & \cdots & \delta_7 \end{bmatrix}^T.\]  
Let $B_{\kappa\rightarrow K}(z)\in\mathbb{C}[z]^{7\by 7}$ be a $\tau_{\kappa\rightarrow K}$-reduced basis matrix corresponding to the interpolation data
\[ \set{ (\sigma_k,\phi_kB_\kappa(\sigma_k))\ :\ k=\kappa+1,\ldots,K }.\]
Then $B_K(z)=B_\kappa(z)B_{\kappa\rightarrow K}(z)$ is a $\tau_K$-reduced basis matrix corresponding to the interpolation data
\[ \set{ (\sigma_k,\phi_k)\ :\ k=1,\ldots,K }.\]
\end{theorem}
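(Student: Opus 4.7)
The theorem asserts three properties of $B_K(z):=B_\kappa(z)B_{\kappa\rightarrow K}(z)$: it is an interpolating polynomial matrix for all $K$ conditions, its columns span the full interpolation submodule $\mathcal{S}_K$ (i.e.\ form a basis), and this basis is $\tau_K$-reduced. I would prove these in sequence, since each builds on the previous and isolates a different difficulty.

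First, I would verify the interpolation property. For $k\leq\kappa$, evaluating gives $\phi_k B_K(\sigma_k)=\phi_k B_\kappa(\sigma_k)\,B_{\kappa\rightarrow K}(\sigma_k)=\zero$, because the columns of $B_\kappa(z)$ already lie in the submodule $\mathcal{S}_\kappa$ associated with the first $\kappa$ conditions. For $\kappa<k\leq K$, the product is $\bigl(\phi_k B_\kappa(\sigma_k)\bigr)B_{\kappa\rightarrow K}(\sigma_k)=\zero$, which is exactly the statement that $B_{\kappa\rightarrow K}(z)$ satisfies the shifted conditions $(\sigma_k,\phi_kB_\kappa(\sigma_k))$ by hypothesis. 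Hence every column of $B_K(z)$ lies in $\mathcal{S}_K$.

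Next, I would prove that the seven columns of $B_K(z)$ generate $\mathcal{S}_K$ with unique expansion coefficients. Given $p(z)\in\mathcal{S}_K\subseteq\mathcal{S}_\kappa$, the basis property of $B_\kappa(z)$ yields a unique $\beta(z)\in\mathbb{C}[z]^{7\times 1}$ with $p(z)=B_\kappa(z)\beta(z)$. For $k>\kappa$, substituting $\sigma_k$ gives $\zero=\phi_k p(\sigma_k)=\bigl(\phi_k B_\kappa(\sigma_k)\bigr)\beta(\sigma_k)$, so $\beta(z)$ solves the shifted interpolation problem and therefore lies in the submodule for which $B_{\kappa\rightarrow K}(z)$ is a basis. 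Consequently $\beta(z)=B_{\kappa\rightarrow K}(z)\alpha(z)$ for a unique $\alpha(z)$, giving $p(z)=B_K(z)\alpha(z)$. Uniqueness follows by chaining the uniqueness of the two intermediate expansions, using that $B_\kappa(z)$ has full column rank as a polynomial matrix (a consequence of being a basis).

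The main obstacle is step three: showing $\tau_K$-reducedness. The idea is that the shift $\tau_{\kappa\rightarrow K}=-[\delta_1,\ldots,\delta_7]^T$ was chosen precisely to make $\tau_K\mbox{--}\degr{B_\kappa^{(i)}v_i}=\degr{v_i}+\delta_i=\degr{v_i}-(\tau_{\kappa\rightarrow K})_i$ for each component of any vector polynomial $v(z)$, so that for any linear combination $q(z)=B_\kappa(z)v(z)$ one has the bound $\tau_K\mbox{--}\degr{q}\leq\tau_{\kappa\rightarrow K}\mbox{--}\degr{v}$. I would then argue that equality in fact holds whenever $v$ is one of the columns of $B_{\kappa\rightarrow K}(z)$, by examining the leading-coefficient (highest-$\tau$-degree) matrix: the $\tau_{\kappa\rightarrow K}$-reducedness of $B_{\kappa\rightarrow K}(z)$ ensures its leading-coefficient matrix is nonsingular, and multiplication by the shifted leading-coefficient structure of $B_\kappa(z)$ preserves nonsingularity because the shifts cancel by construction of $\tau_{\kappa\rightarrow K}$. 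Finally, if some nontrivial combination $\sum_j\alpha_j(z)B_K^{(j)}(z)=B_\kappa(z)\bigl(B_{\kappa\rightarrow K}(z)\alpha(z)\bigr)$ had strictly smaller $\tau_K$-degree than the maximum over columns of $B_K$, then $B_{\kappa\rightarrow K}(z)\alpha(z)$ would have strictly smaller $\tau_{\kappa\rightarrow K}$-degree, contradicting the $\tau_{\kappa\rightarrow K}$-reducedness of $B_{\kappa\rightarrow K}(z)$. The delicate point here is translating reducedness through the matrix product, so I expect this leading-coefficient bookkeeping to be where most of the proof's technical weight lies.
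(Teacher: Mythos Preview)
The paper does not prove this theorem at all; it simply cites Van Barel and Bultheel~\cite{Barel:1994}, Theorem~3. Your proposal therefore goes well beyond what the paper offers, and the three-step outline you give --- interpolation, spanning with uniqueness, then $\tau_K$-reducedness via the predictable-degree/leading-coefficient argument --- is in fact the standard proof one finds in that reference. Steps one and two are clean and correct as written. For step three, your instincts are right but the phrasing could be tightened: the key fact is that $\tau_K$-reducedness of $B_\kappa$ gives the \emph{equality} $\tau_K\text{--}\degr{B_\kappa(z)v(z)}=\max_i\bigl(\delta_i+\degr{v_i}\bigr)=\tau_{\kappa\rightarrow K}\text{--}\degr{v}$ for every $v$ (the predictable-degree property), not merely the inequality you first state. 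Once you have that equality, applying it with $v=B_{\kappa\rightarrow K}(z)\alpha(z)$ and invoking $\tau_{\kappa\rightarrow K}$-reducedness of $B_{\kappa\rightarrow K}$ immediately yields the predictable-degree property for $B_K$, which is equivalent to $\tau_K$-reducedness. With that small sharpening your argument is complete.
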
 
\begin{proof}
See Van Barel and Bultheel~\cite{Barel:1994}, Theorem 3.
\end{proof}

Theorem~\ref{thm:subdivide} provides a method of continually subdividing the interpolation problem into smaller subproblems.  It is also the reason that we compute a $\tau$-reduced basis, as the main result does not hold without the bases being $\tau$-reduced.  However, we still need to solve problems at the finest scale. 

Consider a single interpolation condition, composed of a vector $\phi\in\mathbb{C}^{1\by 7}$ and a node $\sigma$.  Without loss of generality, let $\tau_1$ be the smallest value of $\tau$, and define $\mu_i=-\phi_i/\phi_1$.  Then the polynomial matrix
\begin{equation}
\label{eqn:spbasis}
 B(z) = \begin{bmatrix}
z-\sigma & \mu_2 & \cdots & \mu_7 \\
0 & 1 &  \cdots & 0 \\
\vdots & \vdots & \ddots & \vdots \\
0 & 0  & \cdots & 1
\end{bmatrix}
\end{equation}
satisfies the tangential-interpolation condition
\[ \phi B(\sigma)=\zero^T.\]
Since $\tau_1$ is the minimum $\tau$ value, $B(z)$ is a $\tau$-reduced basis for the submodule $\mathcal{S}_1$ defined by the single interpolation condition $(\sigma,\phi)$ (see~\cite{Barel:1998}).  

We can determine a basis for $\mathcal{S}$ in $\mathcal{O}(n^2)$ operations by processing the conditions serially with the single-point construction of~\eqref{eqn:spbasis} and by invoking Theorem~\ref{thm:subdivide}.  This process corresponds to the ``fast-only'' basis-construction algorithms of~\cite{Heinig:1996} and~\cite{Barel:1998}.  However, the $\mathcal{O}(n^2)$ complexity can be improved by exploiting the structure of the interpolation nodes.  Since the nodes are roots of unity, we can reduce the number of required operations with the recursive ``interleaving'' data-splitting  of Alg.~\ref{alg:divconq}.  In this routine, \textsc{TanInt} is the serial basis-construction function, which for a single point amounts to computing~\eqref{eqn:spbasis}.

\begin{algorithm}
\begin{algorithmic}
\Procedure{$B(z) = $RecTanInt}{$\set{\sigma_k}$, $\set{\phi_k}$}
\State $N=$\textsc{\ length}$(\set{\sigma_k})$
\If {$N=1$}
	\State $B(z)\leftarrow$\textsc{TanInt}$(\set{\sigma_k,\phi_k})$
\Else
	\State $B_L(z)\leftarrow$\textsc{RecTanInt}$\left(\set{\sigma_{2k},\phi_{2k}}\right)$\tabto{3in}\circled{1}
	\For {$k=1,\ldots,N/2$}
		\State $\phi_{2k-1}\leftarrow \phi_{2k-1}B_L(\sigma_{2k-1})$\tabto{3in}\circled{2}
	\EndFor
	\State $B_R(z)\leftarrow$\textsc{RecTanInt}$\left(\set{\sigma_{2k-1},\phi_{2k-1}}\right)$\tabto{3in}\circled{3}
	\State $B(z)\leftarrow B_L(z)B_R(z)$\tabto{3in}\circled{4}
\EndIf
\EndProcedure
\end{algorithmic}
\caption{Recursive tangential-interpolation algorithm for nodes $\set{\sigma_k}$ and vectors $\set{\phi_k}$.}
\label{alg:divconq}
\end{algorithm}

To see how such a strategy is beneficial, we can estimate the total number of operations it requires.  Let $C_n$ be the total cost of calling \textsc{RecTanInt} on a set of $n>1$ points.  Steps \circled{1} and \circled{3} require $C_{n/2}$ operations by definition.  In step \circled{2}, the evaluations $\set{B_L(\sigma_{2k-1})}$ must be computed and multiplied against the $\set{\phi_{2k-1}}$.  Since the $\set{\sigma_k}$ are roots of unity, $B_L(z)$ can be evaluated at the nodes $\set{\sigma_{2k-1}}$ using an FFT of length $n/2$.  Once these values are obtained, there are a total of $p^2n/2$ matrix-vector products (where $p=7$ is the basis size).  Therefore, the number of operations in step \circled{2} is upper-bounded by $c_1 n\log(n)$, where $c_1$ is a constant.  Finally, step \circled{4} requires a number of polynomial multiplications and additions that depends only on the basis size.  Each multiplication can be computed with the FFT, but the FFT size depends on the degrees of the polynomials involved.  By the nature of the basis construction, all of the polynomials in $B_L(z)$ and $B_R(z)$ must have degrees no greater than $n/2$, since they are constructed from $n/2$ conditions.  Therefore, for another constant $c_2$, the number of operations needed to multiply $B_L(z)B_R(z)$ is bounded by $c_2 n\log(n)$.  

Factoring in these costs, we get the recursive cost formula
\[ C_n=2C_{n/2}+cn\log(n),\]
where the constant $c$ is determined by the costs of steps \circled{2} and \circled{4}.  Since this expression is a recurrence, the overall cost of constructing the basis is given by the Master Theorem~\cite{Cormen:2001} as $\mathcal{O}(n\log^2 n)$.  The key to replacing a factor of $n$ from the fast-only basis construction with a factor of $\log^2(n)$ is our ability to evaluate $B_L(\sigma_{2k-1})$ in only $\mathcal{O}(n\log (n))$ operations with the FFT.

To conclude the section, we examine the degree structure of the resulting basis.  Recall from Section~\ref{ssec:frameor} that we can obtain the solution to the Tikhonov-regularization problem if we can construct a $\tau$-reduced basis for $\mathcal{S}$ with a column having $\tau$-degree $\delta=0$.  We now argue that this will indeed be the case. 

For the general Tikhonov problem, there are a total of $3N$ interpolation conditions. In each step of the algorithm, a single condition is used to increase the $\tau$-degree of exactly one column of the current basis -- the column with lowest $\tau$-degree -- by one.  Beginning with no interpolation conditions, and specifying the starting basis as the identity matrix, the $\tau$-degree of the $j^{th}$ column is $-\tau_j$. During each iteration, the column with lowest $\tau$-degree has its degree increased by one (while the remaining $\tau$-degrees are unchanged), and therefore the difference in $\tau$-degree between any two columns of the final basis can be at most one.  Since the values of $\tau$ sum to $3N-6$, the sum of the final $\tau$-degrees is $-(3N-6)+3N=6$.  Since the maximum $\tau$-degree difference between any two columns is one, six of the columns will have $\delta_j=1$ while the seventh has $\delta_j=0$.  Thus, our solution is guaranteed.

\section{A superfast algorithm for Tikhonov regularization}
\label{sec:algorithm}

Our Tikhonov-regularization algorithm can be summarized as follows.  First, we decouple the Gramian matrices $G_T$ and $G_L$ by introducing the artificial variables $\sigma_1$ and $\sigma_2$, turning the original system into the Toeplitz-block system of~\eqref{eqn:ex1}.  We then define the additional artificial variables $\set{\gamma_i}$, which allow us to replace the blocks of this system with partial-circulant matrices using extensions in the form of~\eqref{eqn:circexp}.  The resulting partial-circulant-block system is given in~\eqref{eqn:ex3}.

Once the matrix has been extended, we transform it with a Fourier-block operator to obtain the diagonal-block system of~\eqref{eqn:ex4}.  The coefficients of the diagonal matrices in this expression define tangential-interpolation conditions, and can be calculated through $N$-point FFTs.  Once these conditions are established, we use the divide-and-conquer basis-construction algorithm to build a $\tau$-reduced basis $B^*(z)$, with $\tau$ as in~\eqref{eqn:taudef}.  A single column $j$ of the basis will have $\tau$-degree equal to zero, and the solution $\hat{x}$ is given by
\[ \hat{x}(z) = \dfrac{{B^*_1}^{(j)}(z)}{{B^*_7}^{(j)}}.\]

The basis-construction algorithm can be unstable if not implemented carefully.  As in~\cite{Barel:2001} and~\cite{Barel:2003}, we pivot the interpolation conditions to avoid multiplying by small $\mu_i$ values early in the construction.  Without pivoting, numerical errors may propagate as the algorithm progresses.  

In addition, we take precaution not to process any interpolation conditions for a subproblem that might cause precision errors in {\sl later} subproblems.  If processing an interpolation condition might result in a numerically unstable basis for a subproblem, that condition is skipped and included only after {\sl all} remaining subproblems have been solved. The skipped conditions are marked as ``difficult points,'' and our calculation is more robust if we treat them only at the conclusion of the algorithm. Since the difficult points are processed with the serial basis constructor, the algorithm's efficiency is a function of the number of difficult points we encounter.  For this reason, it is important to avoid generating many difficult points; we shall further discuss this consideration in Section~\ref{sec:implementation}.

\section{Implementation}
\label{sec:implementation}

In this section, we detail two previously unexplored implementation issues with the basis-construction algorithm.  First, we discuss data partitioning and its potential to affect the number of points marked as difficult.  Second, we consider the task of constructing a basis when the number of interpolation conditions is not a power of two, as this has the potential to drastically increase the required overhead.  By addressing these issues, we can maintain a small overhead cost for all matrix sizes and further improve the accuracy of our algorithm.

\subsection{Data partitioning}
\label{ssec:datapart}

In the pseudoinversion algorithm of~\cite{Barel:2003}, the authors separate their basis construction into two stages, each of which uses a separate set of the interpolation conditions.  Extending such an approach to the general Tikhonov problem amounts to computing the final basis as the product
\[ B^*(z)=B_{H}(z)B_{G}(z)B_{F}(z),\]
where the bases $B_H(z)$, $B_G(z)$, and $B_F(z)$ are constructed using the conditions $\set{H_k}$, $\set{G_k}$, and $\set{F_k}$, respectively.  This strategy has the potential to dramatically reduce the number of required calculations, as it allows for smaller basis sizes in the intermediate problems and some data-independent precomputation.  Unfortunately, it also tends to generate many difficult points, making it inefficient in practice.  

To illustrate, consider when the matrix $L$ is square, producing $N=2n$ conditions $\set{H_k}$.  In this scenario, we have the interpolation conditions
\[ \lambda_{3,3}^{(k)}=(-1)^k\hspace{15pt}\text{ and }\hspace{15pt}\lambda_{3,6}^{(k)}=1.\] 
When we subdivide the conditions during the construction of the basis $B_H(z)$, we find that 
\[ \lambda_{3,3}^{(2k)}=\lambda_{3,6}^{(2k)},\hspace{15pt}\text{ and }\hspace{15pt}\lambda_{3,3}^{(2k-1)}=-\lambda_{3,6}^{(2k-1)}.\]
This collinearity is detrimental when we attempt to solve either of the two subproblems.  Specifically, since $\sigma_2(z)$ and $\gamma_3(z)$ have the same degree structure and the same interpolation conditions, they become indistinguishable in the subproblems.

One way to ameliorate this difficulty is to modify the subdivision strategy.  Rather than interleaving subdivisions, we propose the ``paired-interleaving'' of
\begin{eqnarray*}
 H^{(1)} &=& \set{H_1, H_2, H_5, H_6, \cdots } \\
 H^{(2)} &=& \set{H_3, H_4, H_7, H_8, \cdots }.
\end{eqnarray*}
This method will prevent collinearity in conditions corresponding to identity-matrix blocks.  We can still evaluate the basis efficiently at the split nodes, since we have effectively subdivided the data by four while processing two sets at once.  While paired interleaving requires twice as many FFTs to evaluate the basis for a subproblem, these FFTs are half of the size they would be for standard interleaving.   As a result, the number of operations is essentially unchanged.

Paired interleaving can greatly reduce the number of difficult points generated, as it removes most collinearity problems.  Since the difficult points are processed with the fast-only basis constructor, there is a drastic decrease in the number of operations necessary to construct the basis with this strategy.  Additionally, since difficult points generate numerical instability, this approach tends to be less error-prone. 

While paired interleaving helps, a large number of difficult points may still arise if the interpolation conditions for the three block rows are processed in isolation.  Regardless of the subdivision strategy, constructing a basis using only the conditions $\set{H_k}$ amounts to attempting to solve the equation
\[ \sigma_2-Lx=0\]
without any knowledge of $\sigma_2$ or $x$.  In the absence of additional information, there are not enough restrictions on possible solutions to yield a meaningful result.  Accordingly, many difficult points will still be encountered.

Therefore, rather than process the conditions separately, we directly construct the basis from {\sl all sets} of the interpolation conditions.  To retain our ability to evaluate the bases efficiently, our subdivisions use a paired-interleaving split on {\sl each} set of conditions, as illustrated in Fig.~\ref{fig:splitfig}.  While lacking the benefits of reduced basis size and pre-computability, this approach is usually much more efficient for the Tikhonov problems since it generates many fewer difficult points.

\begin{figure}
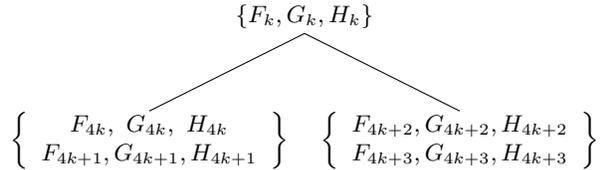

\centering
{\small
\Tree [.$\set{F_k,G_k,H_k}$ $\set{\begin{array}{c}
F_{4k}, \ G_{4k}, \ H_{4k} \\
F_{4k+1}, G_{4k+1}, H_{4k+1}
\end{array}}$ $\set{\begin{array}{c}
F_{4k+2}, G_{4k+2}, H_{4k+2} \\
F_{4k+3}, G_{4k+3}, H_{4k+3}
\end{array}}$ ]
}
\caption{\small\sl Subdivision process during an iteration of the basis construction.  Each set of conditions is subdivided into two components using the paired-interleaving strategy.  Corresponding components for each set of conditions are processed together.}
\label{fig:splitfig}
\end{figure}

\subsection{Data sizes}
\label{ssec:datasize}

We have so far assumed that the algorithm will subdivide the conditions until arriving at a single interpolation point.  As noted in~\cite{Barel:2001}, this is a poor strategy in practice.  At some level, further subdivision results in a higher overhead cost than serial construction.  Hence, our implementation calls the fast-only solver when the number of interpolation conditions for a given subproblem satisfies $K\leq N_{lim}$ for some specified $N_{lim}$.  The optimal value of $N_{lim}$ is machine-dependent; we have empirically found that it tends to lie in the range of 256--512 for our machines. 

We have also implicitly assumed that the total number of conditions we wish to process is a power of two.  If this is indeed the case, the subdivided data points $\set{\sigma_{2k-1}}$ are roots of $-1$, and we can compute $B_L(\sigma_{2k-1})$ using length-$N/2$ FFTs. When $N$ is not a power of two, however, we require longer FFTs.

For instance, consider the case when $N$ is prime; there is no cancellation in the complex exponentials of $\set{\sigma_{2k-1}}$. As a result, we require a full length-$N$ FFT to evaluate $B_L(z)$ at these nodes regardless of how many points there are in the subdivision.  This increased overhead can cause the computation time to vary drastically across problem sizes.

To counteract this effect, we return to the circulant extensions of~\eqref{eqn:circexp}.  For convenience, we developed our basis construction algorithm using $1$-circulant extensions.  However, we may instead choose to perform $k$-circulant extensions, where we choose $k\geq 0$ to allow us to compute shorter FFTs when evaluating $B_L(z)$.  We have found empirically that setting the arbitrary entries in the extensions to zero when $k\geq 1$ can lead to very ill-conditioned problems.  To avoid this, we choose them to be similar in value to the known matrix coefficients.  While we have no theoretical results to justify such a tactic, it appears to be more stable for the problems we have tested.

The question of how to choose an optimal $k$ remains. One immediate option, following the lead of~\cite{Barel:2001}, is to choose $k$ such that $N$ is a power of two; we refer to this as the ``power-of-two'' method. The problem with such an approach is that it will cause the algorithm's complexity function to become highly quantized.  In particular, the number of operations required for $2^p$ points will be significantly lower than it will be for $2^p+1$.

Instead, we propose an alternative which may also be applied to the algorithms of~\cite{Barel:2001} and~\cite{Barel:2003}.  Since we would like each subdivision to produce cancellation in the complex exponentials of the roots of unity, our requirements are:
\begin{enumerate}
\item $N=2^p M$, where $M$ is an integer;
\item $3M \leq N_{lim}$; and
\item $N\geq \tilde{N}$, where $\tilde{N}$ is the number of conditions we would have in each stage with $0$-circulant extensions.
\end{enumerate}
The first condition guarantees cancellation in the complex exponential for the first $p$ subdivisions.  The second condition guarantees that after $p$ subdivisions, the number of remaining points is sufficiently small that the fast-only basis constructor is called.  The final condition assures that the extensions will indeed produce partial-circulant blocks.  These conditions are devised for an interleaving data split; since we are using a paired-interleaving split in practice, the right side of the inequality in the second condition is replaced by $N_{lim}/2$ to ensure that we can evaluate the nodes $\set{\sigma_{4k+i}}$ efficiently.

Alg.~\ref{alg:choosek} provides a simple method for choosing the extension size $k$ to satisfy these criteria.  The algorithm is formulated for interleaving data splitting, and can be easily modified for paired-interleaving splitting by changing the {\sc while} loop condition.  It is also more efficient to force $M$ to be even; this ensures that none of the smallest FFT sizes are prime numbers, reducing the overhead cost of the FFT calls.

\begin{algorithm}
\begin{algorithmic}
\Procedure{$k$ = OptExtend}{$\tilde{N}$,$N_{lim}$}
\State $M\leftarrow \tilde{N}$
\State $M_{tot} \leftarrow 3M$
\State $p \leftarrow 0$
\While {$M_{tot}> N_{lim}$}
\State $p \leftarrow p+1$
\State $M \leftarrow \left\lceil M/2\right\rceil$
\State $M_{tot} \leftarrow 3M$
\EndWhile 
\State $k \leftarrow 2^pM - \tilde{N}$
\EndProcedure
\end{algorithmic}
\caption{Routine for calculating the optimal circulant extension size for minimum number of conditions $\tilde{N}$ and fast-only level $N_{lim}$}
\label{alg:choosek}
\end{algorithm}

Fig.~\ref{fig:ncond} plots the number of interpolation conditions processed for the power-of-two method {\sl versus} our proposed method.  For all data points in the curves, sufficient cancellation occurs to halve the required FFT length each time the nodes are subdivided.  Since the number of conditions is lower, the complexity for our proposed method should increase more smoothly with the problem size.  However, the actual gain in efficiency may depend on the FFT implementation.

\begin{figure}[htb]
\centering
\psfrag{Minimal no. conditions per stage}{\raisebox{-4pt}{\fontsize{10}{11.2}\selectfont Minimal no. conditions per stage}}
\psfrag{Conditions per stage}{\raisebox{4pt}{\fontsize{10}{11.2}\selectfont Conditions per stage}}
\psfrag{Power-of-Two Method}{\fontsize{8}{9.6}\selectfont Power-of-Two Method}
\psfrag{Proposed Method}{\fontsize{8}{9.6}\selectfont  Proposed Method}
\psfrag{16384}{\fontsize{8}{9.6}\selectfont 16384}
\psfrag{8192}{\fontsize{8}{9.6}\selectfont 8192}
\psfrag{4096}{\fontsize{8}{9.6}\selectfont 4096}
\psfrag{2048}{\fontsize{8}{9.6}\selectfont 2048}
\includegraphics[width=4.0in]{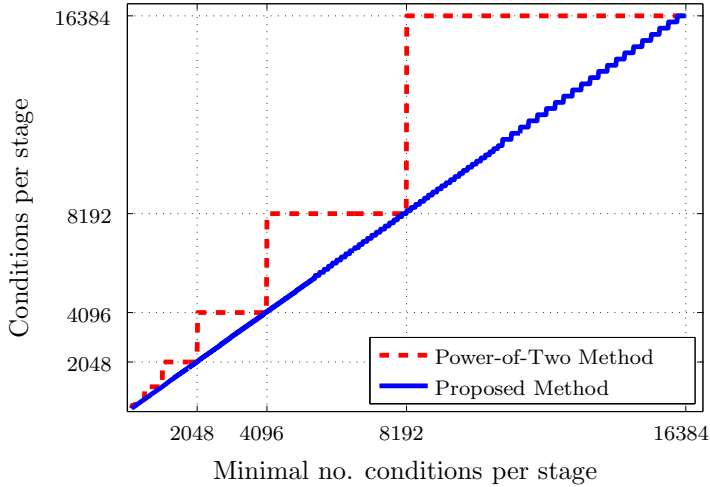}
\vspace{5pt}
\caption{\small\sl Number of conditions processed {\sl versus} the minimal possible number of conditions for the power-of-two method and our proposed method of circulant extension, with $N_{lim}=256$.  The ``quantization'' in the number of conditions processed with our proposed method is significantly milder than that of the power-of-two method.}
\label{fig:ncond}
\end{figure}

\section{Numerical simulations}
\label{sec:simulations}

We now describe several experiments that demonstrate our algorithm's utility.  First, we present the results of example problems with randomly generated matrices to illustrate that the algorithm's computational complexity increases as $\mathcal{O}(n\log^2 n)$.   Next, we compare the tangential interpolator to an iterative Tikhonov solver based on the Conjugate Gradient (CG) method by expressing its execution time in terms of an equivalent number of CG iterations.  Finally, we compute the regularized recovery of a low-frequency signal from its non-uniform Fourier samples.

Our algorithm is coded in C++ with {\sc MEX}-function interfaces to {\sc MATLAB}.  All experiments were run on a 3.16 GHz Intel Core 2 Duo machine with 3.0 GB of RAM under the Ubuntu 12.04 LTS operating system and {\sc MATLAB R2012a}.\footnote{The code for these experiments can be obtained from \url{http://users.ece.gatech.edu/~cturnes3/software.html}.} For each experiment, we used paired-interleaving data splitting and the extension strategy of Section~\ref{ssec:datasize}. 

\subsection{Computational complexity}
\label{ssec:comp}

To confirm the asymptotic cost of our algorithm, we solved a large number of Tikhonov problems across a range of matrix sizes.  Since an exact solution to the Tikhonov-regularized system $Tx=b$ will not return a vector identical to $x$, we instead used our algorithm to solve systems of the form
\[ (G_T+G_L)x=y\]
for known input vectors $x$.  These systems are effectively equivalent to Tikhonov-regularization problems, as we could replace $y$ with $T^HTx$.  However, by comparing our solutions
\[ \hat{x}=(G_T+G_L)^{-1}y\]
with the known input vectors, we can analyze our algorithm's accuracy in applying the matrix $(G_T+G_L)^{-1}$ (see Section~\ref{ssec:acc}).

In each experiment, the system matrix $T$ and Tikhonov matrix $L$ were $n\by n$ Toeplitz matrices whose coefficients were drawn from a (complex) standard normal distribution, for a total of $4n-2$ free parameters per experiment. The value of $n$ was varied in even logarithmic steps between $2^9$ and $2^{15}$, with the resulting average execution times shown in Fig.~\ref{fig:spectikcomp}(a).  In the figure, the execution time is plotted as a function of the number of free parameters rather than the matrix side length.

To verify the algorithm's complexity, we compared the observed execution times to the theoretical behavior. Based on the algorithm's operation count, if the execution time is primarily a function of the number of operations it should be characterized by a function of the form
\[ E(n)=c_1n\log^2 n+c_2n\log n+\varepsilon(n)+\text{ov}(n),\]
where $c_1$ and $c_2$ are constants, $\varepsilon(n)$ consists of lower-order computations, and $\text{ov}(n)$ reflects differences in overhead cost for various problem sizes.  For most problems, the $\mathcal{O}(n\log^2 n)$ calculations dominate the computational cost, but the contribution of  $\mathcal{O}(n\log n)$ calculations is non-negligible for the problem sizes we consider.  We can therefore make the approximation 
\[ E(n)\approx c_1n\log^2 n+c_2n\log n.\]
Using this model, we computed a least-squares fit of the constants $c_1$ and $c_2$ and superimposed the estimated computational cost $E(n)$ on the observed calculation times in Fig.~\ref{fig:spectikcomp}(a).

To compare how the execution time of the tangential interpolator is reduced for $\ell_2$-norm penalization and the Toeplitz-Gramian problem, we repeated the experiments for these cases.  For $\ell_2$-norm penalization, we fixed the regularization parameter $\beta$ for each matrix size. Similarly, we fixed the main diagonal $a_0$ of $G_T$ for each of the Toeplitz-Gramian experiments.  Neither of these choices affect the execution time; we have specified these values only to better control the matrix conditioning, allowing for more meaningful comparisons of accuracy (as will be explained in the next section).  As a result of these choices, the number of free parameters for these problems are $2n-1$ and $3n-2$, respectively.  In Fig.~\ref{fig:spectikcomp}(b) and (c), the average execution times for these problems are plotted as functions of the number of free parameters, and are again compared to least-squares fits of the underlying cost function.

\begin{figure*}[!t]
\centering
\begin{tabular}{ccc}
\psfrag{Execution times}{\fontsize{6}{7.2}\selectfont Execution time}
\psfrag{Complexity estimate E(n)}{\fontsize{6}{7.2}\selectfont Complexity E(n)}
\psfrag{General Problem}{\fontsize{10}{12}\selectfont \hspace{-8pt}{\bf General Problem}}
\psfrag{Total No. of Parameters in Problem}{\raisebox{-3pt}{\fontsize{8}{9.6}\selectfont \hspace{-7pt}Parameters in Problem}}
\psfrag{Execution time (s)}{\raisebox{3pt}{\fontsize{8}{9.6}\selectfont \hspace{-5pt}Execution time (s)}}
\includegraphics[width=2in]{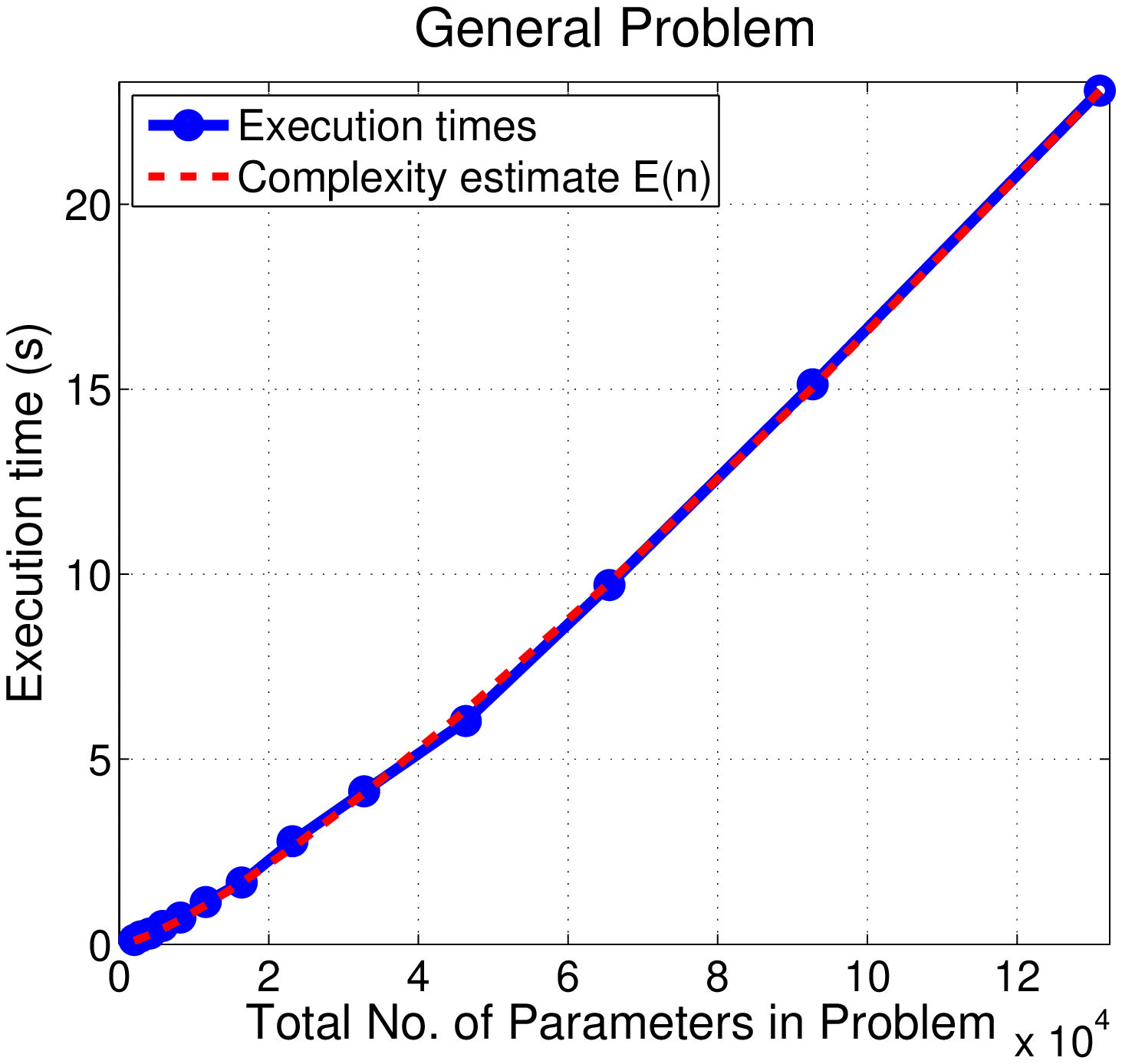} & 
\psfrag{Execution times}{\fontsize{6}{7.2}\selectfont Execution time}
\psfrag{Complexity estimate E(n)}{\fontsize{6}{7.2}\selectfont Complexity E(n)}
\psfrag{Euclidean Norm Penalization}{\fontsize{10}{12}\selectfont \hspace{-2pt}{\bf $\ell_2$-Norm Penalization}}
\psfrag{Total No. of Parameters in Problem}{\raisebox{-3pt}{\fontsize{8}{9.6}\selectfont \hspace{-7pt}Parameters in Problem}}
\psfrag{Execution time (s)}{\raisebox{3pt}{\fontsize{8}{9.6}\selectfont \hspace{-5pt}Execution time (s)}}
\includegraphics[width=2in]{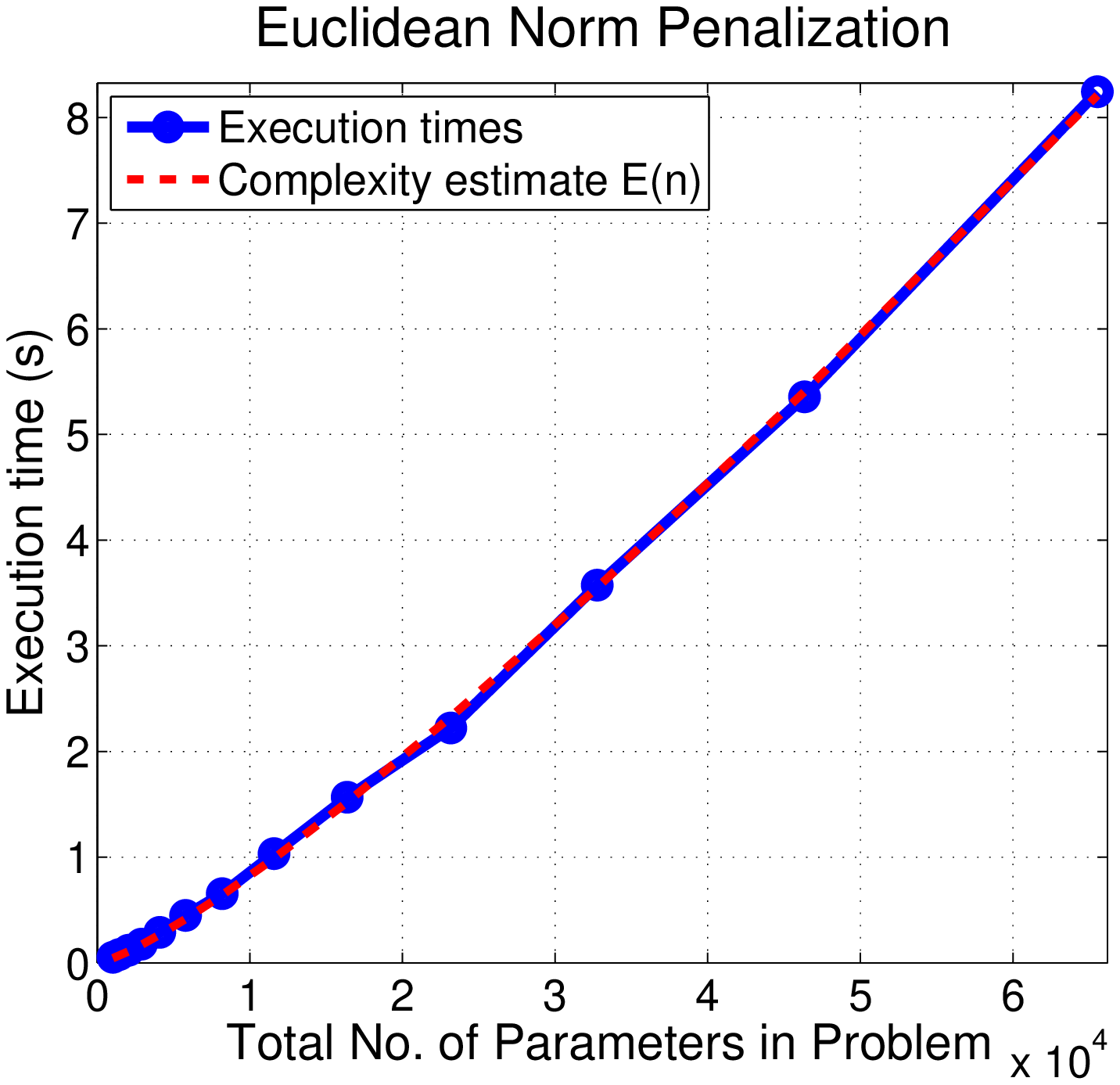} & 
\psfrag{Execution times}{\fontsize{6}{7.2}\selectfont Execution time}
\psfrag{Complexity estimate E(n)}{\fontsize{6}{7.2}\selectfont Complexity E(n)}
\psfrag{Toeplitz Gramian}{\fontsize{10}{12}\selectfont \hspace{-8pt}{\bf Toeplitz Gramian}}
\psfrag{Total No. of Parameters in Problem}{\raisebox{-3pt}{\fontsize{8}{9.6}\selectfont \hspace{-7pt}Parameters in Problem}}
\psfrag{Execution time (s)}{\raisebox{3pt}{\fontsize{8}{9.6}\selectfont \hspace{-5pt}Execution time (s)}}
\includegraphics[width=2in]{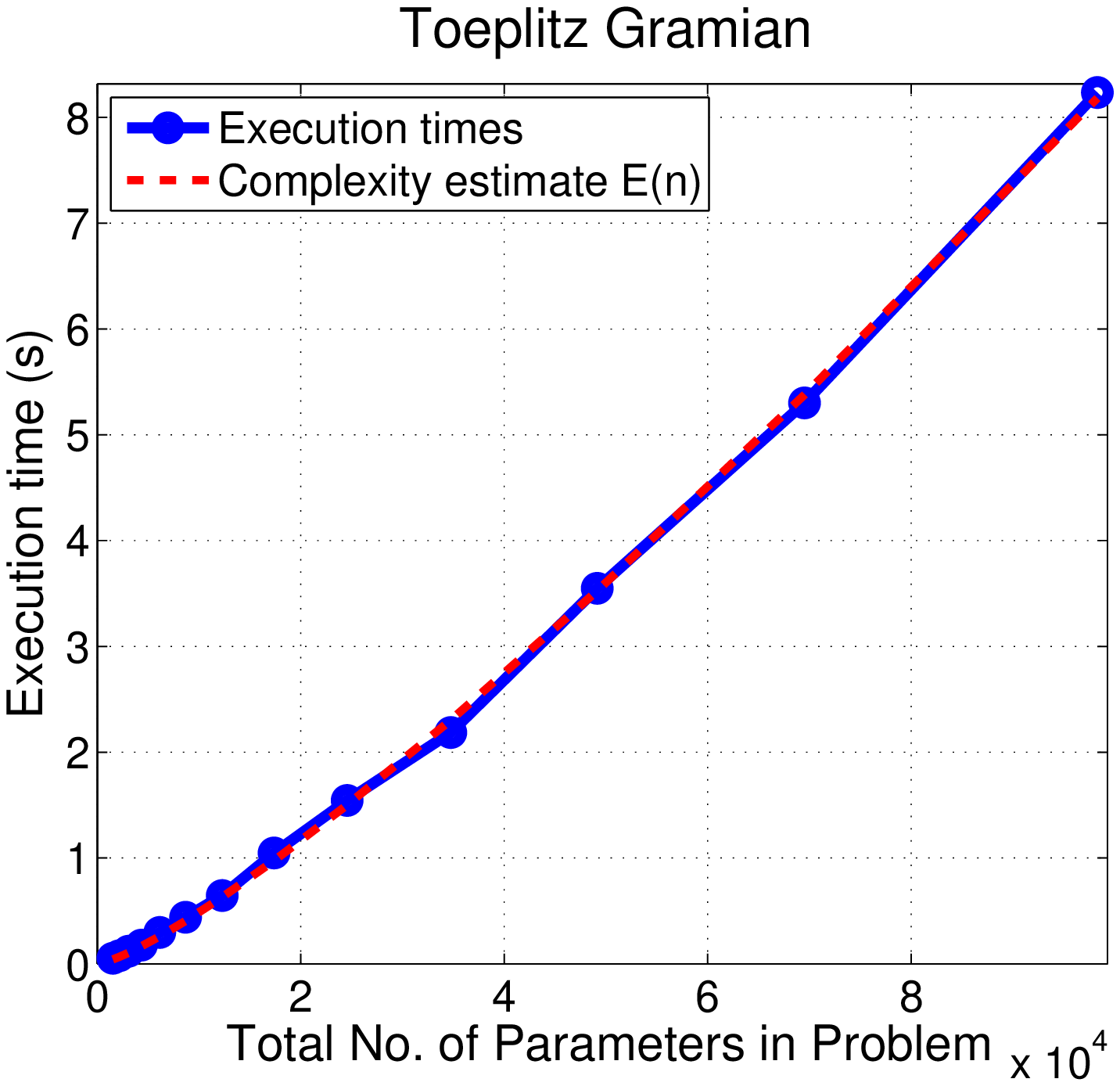} \vspace{10pt}\\
(a) & (b) & (c)
\end{tabular}
\caption{\small\sl Execution time vs. total number of free parameters for (a) square Toeplitz matrices $T$ and $L$ with coefficients drawn from a complex standard normal distribution; (b) square Toeplitz matrices $T$ with coefficients drawn from a complex standard normal distribution and $\ell_2$-norm penalization; and (c) Toeplitz Gramian matrices $G_T$ and Tikhonov matrices $L$ with coefficients drawn from a complex standard normal distribution.  For each data point, the execution time was averaged over 1000 trials.  Also plotted in the figures are the least-squares estimates of the underlying complexity functions, which closely match the observed data.}
\label{fig:spectikcomp}
\end{figure*}

The execution-time curves in Fig.~\ref{fig:spectikcomp} suggest that our algorithm achieves the predicted asymptotic complexity.  Moreover, these experiments give insight into the algorithm's overhead cost for the various problems.  In each plot, the $i^{th}$ data point corresponds to systems of the same size.  The $\ell_2$-norm-penalization problem and the Toeplitz-Gramian problem each require less overhead, as their computation times are significantly smaller than the general problem for systems of the same size.  This result is not surprising; the two special cases involve fewer interpolation conditions and smaller basis sizes than the general problem.

However, we can also consider the cost as a function of free parameters.  In this sense, there is minimal difference in the interpolator's performance for the general problem and for $\ell_2$-norm penalization, but it is more efficient for the Toeplitz-Gramian problem.  While the general problem requires more operations than $\ell_2$-norm penalization, it also contains roughly twice as many parameters.  When considering cost as a function of free parameters, these two effects nearly ``cancel out,'' and the execution time -- as a function of the number of free parameters -- is similar for the two problems.  By contrast, when the Gramian $G_T$ is Toeplitz, the amount of calculation is the same as for a $\ell_2$-norm-penalization problem of the same size, while the number of free parameters increases to $3n-2$.  Therefore, the execution time as a function of the number of parameters drops by a factor of $2/3$ for the Toeplitz-Gramian problem.

\subsection{Accuracy of results}
\label{ssec:acc}

In addition to confirming our algorithm's complexity, we used the experiments of Section~\ref{ssec:comp} to verify that it is able to apply the inverse of the matrix $(G_T+G_L)$ with acceptable accuracy.  For each experiment, we recorded the maximum error between the source vector $x$ and the recovered vector $\hat{x}$.  We then took the maximum error across all trials for each type of problem and matrix size.  

While the tangential interpolator's execution time is seemingly independent of the matrix conditioning, its accuracy is not.  As is to be expected, there are larger errors that propagate throughout the basis construction as the matrix $(G_T+G_L)$ becomes more poorly conditioned, resulting in less exact solutions.  To address this, we fixed certain parameters in our $\ell_2$-norm-penalization and Toeplitz-Gramian experiments to ensure that the matrices generated for each of the three problems had similar condition numbers when all else was equal.  The resulting data better reflects the interpolator's accuracy across the three problems when conditions are effectively the same.

We have empirically found that when the matrix entries are drawn from a standard normal distribution, the sum of the two Gramians $(G_T+G_L)$ tends to be fairly well conditioned for the general problem, even as the matrix size increases.  However, for the $\ell_2$-norm-penalization problem, the conditioning of the single Gramian $G_T$ worsens with increasing size.  To compensate, we set the regularization parameter $|\beta|^2=\sqrt{N}$.  Similarly, the conditioning of the matrix $(G_T+L^HL)$ in the Toeplitz-Gramian problem appears to be largely a function of the diagonal dominance of $G_T$.  We therefore set the main diagonal entry of $G_T$ to be $a_0=10\sqrt{N}$.  The values of $\beta$ and $a_0$ were both found through trial-and-error.

The maximum-error results are listed in Table~\ref{tab:errors}, and suggest that our algorithm applies the inverse with acceptable accuracy for each problem. These results can be further improved with iterative refinement as described in~\cite{Barel:2001} (though this is beyond the scope of this work).  However, the numbers in Table~\ref{tab:errors} do not include any such adjustments, and report the errors after a single pass of the inversion process.  All stabilization measures are intrinsic to the basis-construction algorithm and are factored into the computation times.

\begin{table}[!t]
\renewcommand{\arraystretch}{1.3}
\caption{\small\sl Maximum error between the source vector and the solution returned from the inversion program for the experiments of Section~\ref{ssec:comp}.  For each problem type and matrix size, the maximum errors in the recovered vectors were taken across all 1000 trials. ``Matrix size'' refers to the side length of the matrices for the experiments.}
\centering 
\rowcolors{2}{lightgray}{white}
\begin{tabular}{|C{0.75in}|| C{1in} | C{1in} | C{1in} |} \hline
\textbf{Matrix Size} & \textbf{General Problem} & \textbf{$\ell_2$-Norm Penalization} & \textbf{Toeplitz Gramian} \\ \hline
512  	& 	\sci{9.86}{12}		&	\sci{1.55}{11}	&	\sci{3.73}{12}	\\ 
1024	&	\sci{2.68}{11}		&	\sci{4.38}{11}	&	\sci{1.38}{11}	\\ 
2048	&	\sci{7.56}{11}		&	\sci{1.34}{10}	&	\sci{3.62}{11}	\\ 
4096	&	\sci{1.77}{10}		&	\sci{3.79}{10}	&	\sci{1.19}{10}	\\ 
8192	&	\sci{4.46}{10}		&	\sci{1.13}{9\ }	&	\sci{3.18}{10}	\\ 
16384	&	\sci{1.19}{9\ }		&	\sci{3.39}{9\ }	&	\sci{1.05}{9\ }	\\ 
32768   &	\sci{2.88}{9\ }		&   \sci{1.07}{8\ } &	\sci{3.04}{9\ }	\\ \hline
\end{tabular}
\label{tab:errors}
\end{table}

\subsection{Equivalence in Conjugate Gradient Iterations}
\label{ssec:cg}

The results of Sections~\ref{ssec:comp} and~\ref{ssec:acc} give an absolute measure of the performance of the tangential interpolator.  To gain a sense of perspective, we now translate this performance into a comparison with CG. Since CG is one of the most celebrated iterative methods for solving least-squares problems, it serves as a reasonable benchmark.

Unfortunately, a direct comparison with CG is difficult, as it is an iterative method while the tangential interpolator is not.  Moreover, the convergence speed of CG is dependent on the conditioning of the matrix (among other factors), resulting in variable solution times across different problems of the same size.  By contrast, our algorithm is nearly static in computation time for a given matrix size, as the complexity is primarily dependent on the number of interpolation conditions.  However, we can compare the relative efficiency of the two algorithms by determining how many iterations of CG can be performed in the same amount of time that our superfast solver requires. 

We repeated the experiments of Section~\ref{ssec:comp} for matrices with side-lengths of $n=2^k$, $k=9,\ldots,15$.  For each trial, the time required to compute the explicit inverse of $(G_T+G_L)$ using tangential interpolation was recorded.  We then passed the matrix parameters to an implementation of CG to obtain the solution to the Tikhonov problem, stopping the program when the total elapsed time surpassed the direct-inversion time.  When CG terminated, we recorded the number of {\em completed} iterations (discarding the results of any partial iterations), and averaged it across all trials. The resulting equivalent iteration counts are given in Table~\ref{tab:fftexec}.

To arrive at a fair comparison, we implemented routines that allowed the CG solver to apply the matrix $(G_T+G_L)$ with minimal complexity.  In each case, since either the matrices $T$ and $L$ or their Gramians are Toeplitz, the individual matrices can be applied with FFTs of length $3n-2$.  However, the minimum number of FFTs is different for each problem:
\begin{itemize}
\item General problem: nine -- five to compute $Tx$ and $Lx$ and four to compute $T^H(Tx)$ and $L^H(Lx)$;
\item $\ell_2$-norm penalization: five to compute $T^H(Tx)$;
\item Toeplitz-Gramian problem: seven -- five to compute $G_Tx$ and $Lx$ and two to compute $L^H(Lx)$.
\end{itemize}
In these tallies we have used the fact that the FFT of the generating vector for the matrix $T^H$ can be obtained in $\mathcal{O}(n)$ operations from the FFT of the generating vector for the matrix $T$ (and similarly for $L^H$).

\begin{table}[!t]
\renewcommand{\arraystretch}{1.3}
\caption{\small\sl Number of CG iterations corresponding to the tangential-interpolation Tikhonov solver for the three different Tikhonov-regularization problems.  The table values were calculated by averaging the equivalent number of iterations in each scenario over 1000 trials. ``Matrix size'' refers to the matrix side length for the experiments.}
\centering 
\rowcolors{2}{lightgray}{white}
\begin{tabular}{|C{0.75in}|| C{1in} | C{1in} | C{1in} |} \hline
\textbf{Matrix Size} & \textbf{General Problem} & \textbf{$\ell_2$-Norm Penalization} & \textbf{Toeplitz Gramian} \\ \hline
512   	& 	70.0	&	54.2		&  41.9 \\
1024 	&   74.6	&	55.7		&  41.2 \\
2048 	&	84.3	&	60.0		&  43.8 \\
4096 	&	118.6	&	82.0		&  56.1 \\
8192 	&	126.9	&	84.7		&  58.3 \\
16384 	&	169.7	&	104.4		&  77.4 \\ 
32768 	&	240.2	& 	123.8		&  86.6 \\ \hline
\end{tabular}
\label{tab:fftexec}
\end{table}

As indicated in Table~\ref{tab:fftexec}, the equivalent number of CG iterations increases with the matrix side-length.  This is to be expected; the CG iterations use $\mathcal{O}(n\log n)$ operations while our algorithm contains procedures requiring $\mathcal{O}(n\log^2 n)$ operations.  Therefore, it is unsurprising that as $n$ increases, more CG iterations can be run in the same amount of time it takes for the tangential interpolator to build a solution.

In addition, we may compare the accuracy of the two algorithms for the same execution time.  For each experiment, we recorded the maximum error in the solutions returned by the two algorithms.  We then took the maximum of these errors across all trials, with the results shown in Table~\ref{tab:cgerrors}.

\begin{table*}[!t]
\centering 
\caption{\small\sl Maximum errors between the source vector and the returned solutions of both the CG method and the tangential-interpolation algorithm for the three Tikhonov problem types.  For each problem type and matrix size, the maximum errors in the recovered vectors were computed across all trials. In all trials, the CG method was terminated after surpassing the time required for the inversion program to return a solution to the same problem. ``Matrix size'' refers to the side length of the matrices for the experiments.}
\rowcolors{5}{white}{lightgray}
\begin{tabular}{|C{0.7in}||C{0.625in}|C{0.625in}||C{0.75in}|C{0.75in}||C{0.625in}|C{0.625in}||} \hline
\multicolumn{1}{|c||}{\multirow{3}{0.7in}{\centering\bf Matrix Size}} & 
\multicolumn{2}{c||}{\multirow{2}{1.25in}{\centering\bf General Problem}} & 
\multicolumn{2}{c||}{\multirow{2}{1.5in}{\centering\bf \vbox{$\ell_2$-Norm Penalization}}} &
\multicolumn{2}{c||}{\multirow{2}{1.25in}{\centering\bf Toeplitz Gramian}} \\ 
\multicolumn{1}{|c||}{} & \multicolumn{2}{c||}{} & \multicolumn{2}{c||}{} & \multicolumn{2}{c||}{} \\\cline{2-7}
 & CG & Direct & CG & Direct & CG & Direct\\\hline 
512   	& \sci{1.64}{7\ } & \sci{8.66}{12}  & \sci{1.02}{3\ } & \sci{1.45}{11}  & \sci{1.21}{4\ } & \sci{3.84}{12} \\
1024 	& \sci{1.61}{7\ } & \sci{2.52}{11}  & \sci{5.26}{3\ } & \sci{4.45}{11}  & \sci{4.18}{3\ } & \sci{1.30}{11} \\ 
2048 	& \sci{6.03}{8\ } & \sci{7.49}{11}  & \sci{1.21}{2\ } & \sci{1.28}{10}  & \sci{1.22}{3\ } & \sci{4.02}{11} \\
4096 	& \sci{1.14}{9\ } & \sci{1.70}{10}  & \sci{1.22}{2\ } & \sci{4.00}{10}  & \sci{1.48}{4\ } & \sci{1.29}{10}	\\
8192 	& \sci{2.02}{10}  & \sci{4.32}{10}  & \sci{1.62}{2\ } & \sci{1.14}{9\ } & \sci{3.51}{3\ } & \sci{3.53}{10}	\\
16384 	& \sci{2.28}{10}  & \sci{1.08}{9\ } & \sci{5.85}{2\ } & \sci{3.51}{9\ } & \sci{2.20}{4\ } & \sci{1.01}{9\ } \\
32768 	& \sci{1.09}{14}  & \sci{2.75}{9\ } & \sci{3.82}{2\ } & \sci{1.10}{8\ } & \sci{1.01}{2\ } & \sci{3.23}{9\ } \\ \hline
\end{tabular}
\label{tab:cgerrors}
\end{table*}

Table~\ref{tab:cgerrors} reflects the potential performance gains that can be realized with our algorithm.  The tangential interpolator is only markedly outperformed by the CG solver when the matrices of the general problem become very large.  For $\ell_2$-norm penalization and the Toeplitz-Gramian problem, CG requires much more time to achieve a comparable level of accuracy than the tangential interpolator even when the condition numbers are kept reasonable.

\subsection{Non-Uniform Fourier Experiments}
\label{ssec:nufft}

To demonstrate a practical use of our algorithm, we applied it to the task of reconstructing a signal from non-uniform samples of its spectrum.  This is a common problem in signal processing, and one that is particularly relevant (albeit in a multidimensional variant) for magnetic resonance imaging (MRI).  For one-dimensional signals, the spectrum of a discrete signal $x=[x_s]$ may be sampled at an arbitrary frequency $f_0 \in[-1/2, 1/2)$ by evaluating the sum
\[ X(f_0)=\dsum{s}{}{x_s\ema^{-\jma 2\pi f_0s}}.\]
Collectively, a set of $K$ spectral samples at frequencies $\set{f_k}$ may be obtained by evaluating the matrix-vector product $X = Ax$, where $A$ is a Fourier-like matrix with entries
\[ A_{ks} = \ema^{-\jma 2\pi f_k s}.\]

When $K=N$ and the $\set{f_k}$ are uniformly spaced in $[-1/2,1/2)$, the matrix $A$ is a Fourier matrix. Accordingly, the signal $x$ may be recovered from its spectral samples by
\[ A^HX=A^HAx=x.\]
However, when the frequencies are not uniformly spaced, the matrix $A$ is often severely ill-conditioned, and a regularized solution to the system $X=Ax$ is required. The Tikhonov regularization for the problem is typically formulated as
\[ \hat{x} = (A^HWA+L^HL)^{-1}A^HWX,\]
where $W$ is a diagonal weighting matrix that compensates for the sampling density in the Fourier domain and $L$ is the regularizer.  For MRI reconstruction problems, a Voronoi-cell weighting is typically used to compute the sampling density~\cite{Rasche:1999}, yielding the entries of $W$.  

Straightforward calculations show that the ``weighted Gramian'' $A^HWA$ is structured, with entries
\[ (G_A)_{rs}=(A^HWA)_{rs}=\dsum{k=1}{K}{w_{kk}\ema^{\jma 2\pi f_k(r-s)}}.\]
Since the entries of $G_A$ depend only on the index difference $(r-s)$, $G_A$ is Toeplitz.  If the Tikhonov matrix $L$ is also Toeplitz, this amounts to the Toeplitz-Gramian problem, and our algorithm may be used to determine the closed-form solution to the Tikhonov-regularization problem.

To demonstrate, we generated length-$4096$ input signals consisting of linear combinations of sinusoids of three randomly-chosen frequencies in the digital frequency range $[0, 0.02]$.  For each input signal, we acquired $4096$ spectral samples at random frequencies chosen from a triangular distribution over $[-1/2,1/2)$.  As the number of Fourier samples is equal to the length of the input signals, the corresponding Gramian matrices $G_A$ were severely rank-deficient.  

Since the input signals contained only low-frequency sinusoids, a scaled second-order difference matrix 
\[ L_{r-s} = \begin{cases}
-\text{\sci{1}{4}} & |r-s| = 1 \\
\text{\sci{2}{4}} & r-s = 0 \\
0 & \text{else}
\end{cases}\]
serves as a mild but effective regularizer, penalizing solutions with high-frequency content.  

A typical reconstruction is shown in Fig.~\ref{fig:nufft}.  In this example, the Gramian $G_A$ had a numerical rank of $3050$ despite being $4096\times 4096$, and the condition number of the matrix $(G_A+L^HL)$ was approximately \scip{9.7}{6}.  Despite this somewhat poor conditioning, we were able to acquire a reasonable reconstruction through direct inversion in less than $0.7$ seconds.

\begin{figure}[!t]
\centering
\psfrag{Segment of Signal}{\raisebox{-3pt}{\fontsize{9}{10.8}\selectfont \hspace{-5pt}Segment of Signal}}
\psfrag{Nonuniform Fourier Reconstruction from 4096 Samples}{\fontsize{10}{12}\selectfont \hspace{0pt}{\bf Nonuniform Fourier Reconstruction (4096 Samples)}}
\psfrag{Original Signal}{\fontsize{8}{9.6}\selectfont Original Signal}
\psfrag{Reconstructed Signal}{\fontsize{8}{9.6}\selectfont Reconstructed Signal}
\includegraphics[width=4in]{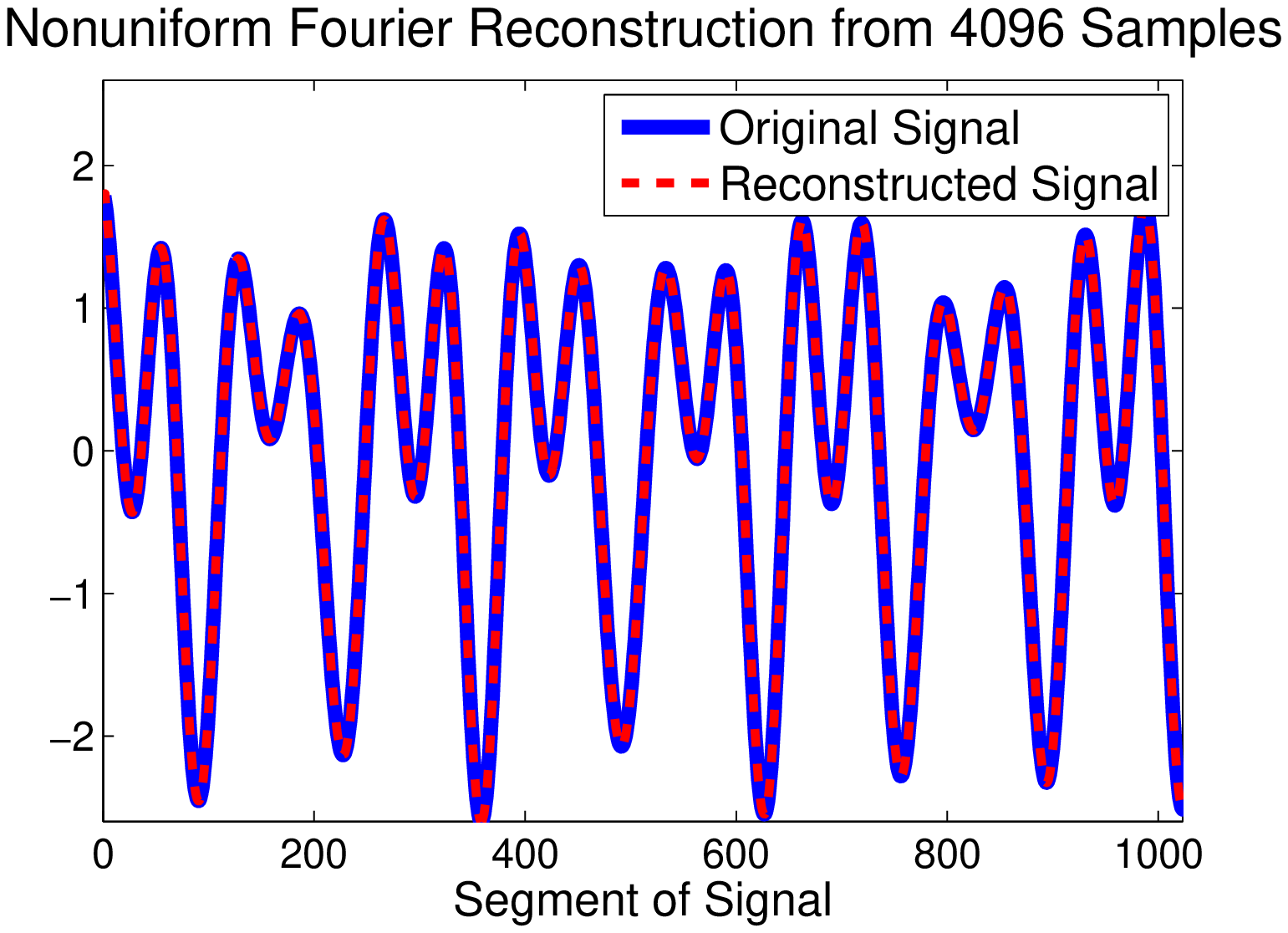}
\caption{\small\sl A segment of the reconstruction of a low-frequency input signal using $4096$ non-uniformly spaced Fourier samples {\sl via} superfast Tikhonov regularization, where the Tikhonov matrix was a scaled second-order difference matrix.  Despite the poor conditioning of the matrix, a reasonable reconstruction was produced in under a second.}
\label{fig:nufft}
\end{figure}

Our reconstruction may be compared to one achieved with CG.  Again time-limiting the CG reconstruction based on the runtime of the tangential interpolator, we computed the iterative solution to the system.  With both solutions available, we obtained the residual vector $x-\hat{x}$ for each method and plotted the results on the same scale in Fig.~\ref{fig:nufft2}.  While both methods achieve reasonable reconstructions, our tangential-interpolation algorithm outperforms CG in reconstruction quality for an equal amount of computation time.

\begin{figure}[!t]
\centering
\psfrag{Residual Vectors}{\fontsize{10}{12}\selectfont \hspace{-30pt}{\bf Reconstruction Residual Vectors}}
\psfrag{Sample Index}{\raisebox{-3pt}{\fontsize{9}{10.8}\selectfont \hspace{0pt}Sample Index}}
\psfrag{Residual Value}{\raisebox{1pt}{\fontsize{9}{10.8}\selectfont \hspace{-5pt}Residual Value}}
\psfrag{CG Residual}{\fontsize{7.5}{9}\selectfont CG Residual}
\psfrag{Tan. Int. Residual}{\fontsize{7.5}{9}\selectfont Tan. Int. Residual}
\includegraphics[width=4.5in]{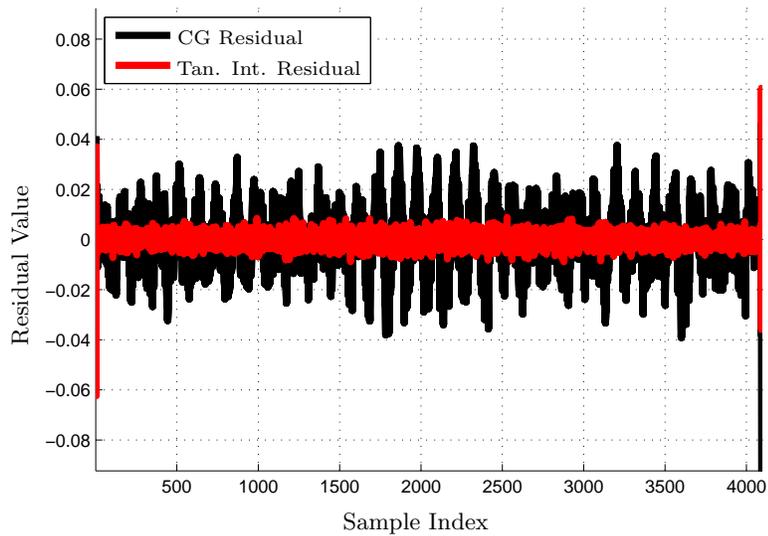}
\caption{\small\sl Residuals of the reconstructed signals from our tangential-interpolation method and the CG method.  Both methods yield a reasonable reconstruction in the same amount of time, with the tangential-interpolation method having a smaller residual.}
\label{fig:nufft2}
\end{figure}

\section{Summary and future extensions}
\label{sec:conclusions}

In this work, we have proposed an algorithm for solving three different Toeplitz-structured Tikhonov-regularization problems with complexity $\mathcal{O}(N\log^2 N)$, where $N$ is the total number of free parameters.  This algorithm solves a tangential-interpolation problem in place of a linear-algebraic problem, much like the superfast pseudoinversion algorithm of~\cite{Barel:2003}, and is based on the ``extension-and-transformation'' approach of~\cite{Heinig:1996}. Further, it is stabilized and non-iterative.

We have demonstrated through a series of experiments that our implementation of the proposed algorithm produces accurate results and does so with a computational cost that closely matches the theoretical asymptotic bound.  While a direct comparison between the two algorithms is difficult, we were able to frame our results in terms of the number of iterations that a CG solver may perform in the same amount of time. In comparing the accuracy of the two solution methods for the same amount of time, our algorithm appears to be preferable across a wide range of scenarios. In addition, we showed that our algorithm may be used in practical settings by recovering a signal from its non-uniform spectral samples.

It is possible to extend our approach to {\sl multilevel} Toeplitz matrices, which exhibit Toeplitz structure in multiple scales. The extension-and-transformation approach can be applied directly to such problems, but unfortunately requires a much higher asymptotic cost than desired.  However, we have recently presented an efficient inversion algorithm for two-level Toeplitz matrices that exhibit triangularity in one or more of their levels~\cite{Turnes:2012}.  In a future work, we will combine the theoretical results that form the basis of the inversion method with the regularization algorithm of this paper to produce a least-squares solver for this class of matrices.

\bibliographystyle{IEEEbib}
\bibliography{Tikhonov}

\end{document}